    \newcommand{\blind}{0}
    \renewcommand\section{\@startsection {section}{1}{\z@}%
                                       {-3.5ex \@plus -1ex \@minus -.2ex}%
                                       {2.3ex \@plus.2ex}%
                                       {\normalfont\fontfamily{phv}\fontsize{16}{19}\bfseries}}
    \renewcommand\subsection{\@startsection{subsection}{2}{\z@}%
                                         {-3.25ex\@plus -1ex \@minus -.2ex}%
                                         {1.5ex \@plus .2ex}%
                                         {\normalfont\fontfamily{phv}\fontsize{14}{17}\bfseries}}
    \renewcommand\subsubsection{\@startsection{subsubsection}{3}{\z@}%
                                        {-3.25ex\@plus -1ex \@minus -.2ex}%
                                         {1.5ex \@plus .2ex}%
                                         {\normalfont\normalsize\fontfamily{phv}\fontsize{14}{17}\selectfont}}
    \newcommand{\algorithmstyle}[1]{\renewcommand{\algocf@style}{#1}}
    \DeclareMathOperator*{\argmin}{argmin} 
    \DeclareMathOperator*{\argmax}{argmax} 
    \newcommand*\bigcdot{\mathpalette\bigcdot@{.7}}
    \newcommand*\bigcdot@[2]{\mathbin{\vcenter{\hbox{\scalebox{#2}{$\m@th#1\bullet$}}}}}
    \newtheorem{corollary}{Corollary}
\begin{document}
		
		\def\spacingset#1{\renewcommand{\baselinestretch}%
			{#1}\small\normalsize} \spacingset{1}
		
		\if0\blind
		{
			\title{{\bf Decision-Dependent Distributionally Robust Markov Decision Process Method in Dynamic Epidemic Control}}
			\author{Jun Song, William Yang and Chaoyue Zhao \\
			Department of Industrial and Systems Engineering, University of Washington }
			\date{}
			\maketitle
		} \fi
		
		\if1\blind
		{

			\title{{\bf Decision-Dependent Distributionally Robust Markov Decision Process Method in Dynamic Epidemic Control}}
			\author{Author information is purposely removed for double-blind review}
			
\bigskip
			\bigskip
			\bigskip
			\begin{center}
				{\LARGE {\bf Decision-Dependent Distributionally Robust Markov Decision Process Method in Dynamic Epidemic Control}}
			\end{center}
			\medskip
		} \fi
		\bigskip
		
	\begin{abstract}
In this paper, we present a Distributionally Robust Markov Decision Process (DRMDP) approach for addressing the dynamic epidemic control problem. The Susceptible-Exposed-Infectious-Recovered (SEIR) model is widely used to represent the stochastic spread of infectious diseases, such as COVID-19. While Markov Decision Processes (MDP) offers a mathematical framework for identifying optimal actions, such as vaccination and transmission-reducing intervention, to combat disease spreading according to the SEIR model. However, uncertainties in these scenarios demand a more robust approach that is less reliant on error-prone assumptions. The primary objective of our study is to introduce a new DRMDP framework that allows for an ambiguous distribution of transition dynamics. Specifically, we consider the worst-case distribution of these transition probabilities within a decision-dependent ambiguity set. To overcome the computational complexities associated with policy determination, we propose an efficient Real-Time Dynamic Programming (RTDP) algorithm that is capable of computing optimal policies based on the reformulated DRMDP model in an accurate, timely, and scalable manner. Comparative analysis against the classic MDP model demonstrates that the DRMDP achieves a lower proportion of infections and susceptibilities at a reduced cost. 
	\end{abstract}
			
	\noindent%
	{\it Keywords:} Distributionally robust Markov decision process; distributionally robust optimization; SEIR model; real-time dynamic programming; epidemic control.
	\spacingset{1.5} 

\section{Introduction}
Infectious disease is a major contributing factor to human morbidity and mortality and it has a devastating impact on both human welfare and the economy. The COVID-19 outbreak has caused millions of infections and deaths worldwide, and has led to a $3.2\%$ global economy recession in 2020 \citep{world_economy_outlook}.
The SARS outbreak in 2003 was another major epidemic that incurred a worldwide loss of about 50 billion dollars \citep{knobler_2004_sarscost}. 

In this light, various mathematical models have been studied in the past few decades to understand the epidemic progression dynamics and to develop cost-effective interventions to control the spread of diseases, from the 2003 SARS outbreaks \citep[e.g.,][]{yan2008_optimalcontrolsars, gaff_2009_optimalcontrolvariousmodels}, to recent COVID‐19 pandemic \citep[e.g.,][]{hou_2020_covid, peng_2020_covid, zhao_2020_covid, giulia_2020_covid, henrik_2020_covid, biao_2020_covid, lopez_2021_covid_seir, grimm_2021_covid_seir}. Although these approaches provide powerful insights into building strategies to reduce the impact of epidemics on a macroscopic level, they are not specifically structured to assist real-time public health decision-making through rapidly evolving epidemics.

To address this issue, Markov Decision Process (MDP) has been proposed as a method to develop dynamic control policies in the stochastic environment of infectious disease propagation \citep{yaesoubi_2011_epidemic_control, parvin_2012_epidemic_control, sabbadin_2013_animal_epidemic_control, viet_2018_animal_epidemic_control}. In MDP-based epidemic control models, one important component is to model the transition probabilities, which are to characterize disease spreading dynamics through a population. However, in practice, it is difficult to estimate the transition probabilities accurately. Inaccurate estimation of transition probabilities can significantly deteriorate a model's ability to recommend effective intervention strategies.  However, there are a limited amount of MDP-based approaches that can cope with uncertainties in transition probabilities in epidemic control. 

One general method that is capable of addressing uncertainty in MDP is robust MDP \citep{white_1992_rmdp, nilim_2004_rmdp, nilim_2005_rmdp, iyengar_2005_rmdp}, which considers the worst-case realization of the uncertain parameter, and it has been applied to tackling uncertainties in transition probabilities in epidemic control problems. For example,  \cite{bhardwaj_2020_rmdp_covid} formulate a discrete-time epidemic model as a robust MDP and solve it with parameter-wise robust reinforcement
learning. In  \cite{bhardwaj_2020_rmdp_covid}, some environmental parameters, which are used to model the transition probabilities, are considered as uncertain parameters, and the solutions are based on the worst-case scenario of environmental parameters. However, robust MDP is often criticized as its overly conservativeness, since the worst-case happens rarely.

In this paper, we utilize a distributionally robust optimization approach to address the MDP model for epidemic control. There are two main advantages for the proposed Distributionally Robust Markov Decision Process (DRMDP) model. First, the DRMDP model allows the probability distribution of transition probabilities to be ambiguous. That is, instead of fixing the probability distribution of transition probabilities, we construct an ambiguity set of the distribution,  to embrace the uncertainty of transition probabilities and consequently increase the model's robustness. Second, the DRMDP model minimizes the total expected health and economic loss under the worst-case probability distribution of transition probabilities. This allows the model to give a conservative solution that hedges against the uncertainty of transition probabilities but not overly conservative as robust MDP, since DRMDP considers the stochastic nature of the random parameters instead of totally ignoring it as robust MDP does.

Different shapes and sizes of the ambiguity set of the distribution will affect the computational complexity and the robustness of the final solution. Among the few existing DRMDP studies, there are two main approaches to ambiguity set construction. The two approaches are to construct the ambiguity set based on the moment information \citep{xu_2010_drmdp, yu_2016_drmdp_counterpart, yang_2017_drmdp_moment, chen_2019_drmdp_general}, or distribution information \citep{osogami_2012_drmdp_kl, yang_2017_drmdp_wass, chen_2019_drmdp_general}. In epidemic settings,  limited information, especially in the early stage of epidemic evolution, makes it hard to construct an ambiguity set with density information. However, we can efficiently obtain estimates for the moments of epidemic model parameters in an Susceptible-Exposed-Infectious-Recovered (SEIR) model, so we utilize moment information to construct the ambiguity set in this paper.

In traditional distributionally robust optimization (DRO) settings, the ambiguity set of distributions is assumed to be exogenous, i.e., the distributions of random parameters are independent of what decisions have been made. However, during an epidemic, public health decisions can directly affect the spread of infectious diseases, which can be reflected by the transition probabilities. Therefore, to model the uncertainty of transition probabilities in dynamic epidemic control, we need to consider a decision-dependent (i.e., endogenous) ambiguity set. 

Previous work using DRO under endogenous uncertainty mainly focuses on single-stage or two-stage settings \citep{zhang_2016_endogenuous, royset_2017_endogenuous, luo_2020_endogenuous, noyan_2020_endogenuous, basciftci_2020_endogenuous}. For example, \cite{basciftci_2020_endogenuous} reformulate a two-stage endogenous distributionally robust facility location problem as a mixed-integer programming (MIP) using a McCormick envelope and linear decision rules. DRO under endogenous uncertainty with a multistage setting has not been explored until recently \citep{yu_2020_endogenuous, nakao_2020_drpomdp}. When extending endogenous DRO to a multistage setting, it remains challenging to design an efficient multistage algorithm. \cite{nakao_2020_drpomdp} model a differential equation based epidemic system as partially observable DRMDP, where a first moment ambiguity set of transition-observation probabilities is employed. Though partially observable DRMDP is defined with a continuous belief, \cite{nakao_2020_drpomdp} is not directly applicable to the continuous state case. In our paper, we consider the DRMDP epidemic model with a general state space, and we adopt a modified Real-Time Dynamic Programming (RTDP) algorithm to efficiently solve for optimal policies based on the corresponding DRMDP problem. 

To summarize, we utilize DRMDP under endogenous uncertainty to address the dynamic epidemic control problem. We highlight our contributions as follows: 

\begin{enumerate}
    \item {We propose DRMDP formulations under a decision-dependent ambiguity set to model the epidemic control problem. The ambiguity set is robust to the setting where the true transition probabilities are unknown}. We then derive a mixed-integer programming (MIP) reformulation of the DRMDP. 
    \item We develop a modified Real-Time Dynamic Programming (RTDP) method to efficiently compute optimal policies based on our DRMDP with a general state space. Since it computes an optimal partial epidemic control policy only for the states that are reachable from the initial state, this algorithm is significantly more efficient compared to the traditional value iteration algorithm which solves for the entire state space. 
    \item The numerical experiments verify that, as compared to the classic MDP, our DRMDP algorithm finds a better policy (i.e., a policy with lower total discounted health and economic loss) under misspecified distributions of transition probabilities. Furthermore, our results show that the DRMDP is more effective than the classic MDP in controlling the number of infectives. 
\end{enumerate}

\section{Preliminaries and Problem Setup}
\label{problem_setup}
Markov Decision Processes (MDP) are commonly used to solve sequential decision-making problems. In general, a finite-horizon discounted Markov Decision Process (MDP) is defined as a tuple $<T, \lambda, \mathcal{S}, \mathcal{A}, \bm{p}, \bm{r}>$, where $T$ is the time horizon, $\lambda$ is the discount factor, $\mathcal{S}$ is the state space, $\mathcal{A}$ is the action space, $\bm{p}$ is the transition probability between states depending on the action taken, and $\bm{r}$ is the reward associated with the state and the action taken. In this section, we model our problem as a continuous-state MDP model. We present the different MDP components in the context of the dynamic epidemic control problem below: 

\begin{itemize}
\item {\bf State}: The Susceptible-Exposed-Infectious-Recovered (SEIR) model is a classic model to describe the influenza epidemic, but can be generalized model to any infectious disease. The state of disease spread at stage $t$ is defined as $s^t = (p_S(t), p_E(t), p_I(t))$, where $p_S(t)$ denotes the proportion of susceptible individuals in the population at stage $t$, $p_E(t)$ denotes the percentage of exposed individuals at stage $t$, $p_I(t)$ denotes the percentage of infectious individuals at stage $t$. The state space can be defined as $\mathcal{S} = \{(p_S, p_E, p_I) \in \mathbb{R}_+^3 \hspace{1mm} | \hspace{1mm}  p_S + p_E + p_I \le 1 \}$. We assume the population size $N$ stays constant throughout the epidemic, and that susceptible individuals immediately gain lifelong immunity from vaccination.

\item {\bf Action}: We consider two categories of actions for controlling the spread of influenza: vaccination and transmission-reducing intervention. At each stage $t$, the decision maker will decide the proportion of susceptibles to vaccinate. $y_V (t) \in \{0,\dots,L\}$ represents {the scale of susceptibles to vaccinate} at stage $t$, where $y_V(t) = i$ corresponds to vaccinating $\frac{i}{L} \times 100\%$ susceptibles at stage $t$.  The transmission-reducing interventions are interventions that can be employed or lifted during the pandemic to reduce transmission, such as social distancing, wearing face masks, quarantining, closing schools etc. At each stage $t$, $y_R (t) \in \{0,\dots,M\}$ represents the scale of transmission-reducing interventions based on its strength, i.e., $0$ represents no transmission-reducing action, and $M$ represents the strongest transmission-reducing action. The action at stage $t$ can be defined as $a^t = (y_V(t), y_R (t))$, and the action space can be defined as $\mathcal{A} = \{(y_V, y_R) \in \mathbb{N}^2 | y_V \le L, y_R \le M\}$.

\item {\bf Transition Probabilities}: The transition probability $p_{as} (s') = P(s'|s,a)$ represents the probability of transitioning from the state-action pair $(s,a) \in \mathcal{S} \times \mathcal{A}$ in stage $t$ to the next state $s' \in \mathcal{S}$ in stage $t+1$. Its value depends on the stochastic epidemiological processes and on control measures implemented by the policy maker.  We define the vector $\bm{p}_{as} = (p_{as}(s'), s' \in \mathcal{S})^T$ as a collection of transition probabilities from the state-action pair $(s,a) \in \mathcal{S} \times \mathcal{A}$ to each $s' \in \mathcal{S}$. In other words, the $s'$ component of $\bm{p}_{as}$ is simply $p_{as}(s')$. Note that because $S$ is a continuous state space, $\bm{p}_{as}$ is an infinite-dimensional vector.

We will derive formulas for the transition probabilities using the underlying structure of the SEIR model. Let $n_B(t)$ denote the number of susceptible individuals who become exposed during time $t$, $n_C(t)$ denote the number of exposed individuals who become infectious during time $t$ and $n_D(t)$ denote the number of infectious individuals who recover during time $t$. The discrete-time stochastic SEIR model in \cite{lekone_2006_seir} specifies the following relationships:
\begin{equation*}
\begin{split}
    Np_S(t+1) &= Np_S(t)(1-\frac{y_V(t)}{L}) - n_B(t), \\
    Np_E(t+1) &= Np_E(t) + n_B(t) - n_C(t), \\
    Np_I(t+1) &= Np_I(t) + n_C(t) - n_D(t),
\end{split}
\end{equation*}
with $n_B(t) \sim \text{Bin} (Np_S(t) \times (1-\frac{y_V(t)}{L}), \phi(t))$, $n_C(t) \sim \text{Bin} (Np_E(t), \rho_C)$, and $n_D(t) \sim \text{Bin} (Np_I(t), \rho_D)$, where $\text{Bin}(.,.)$ denotes the binomial distribution, $\phi(t) = 1 - \exp(-(1-\alpha(t))\mu p_I(t)\beta)$, $\rho_C = 1 - \exp(-l_C)$, and $\rho_D = 1 - \exp(-l_D)$. The parameter $\mu$ denotes the contact rate when no transmission reduction method is used, and the parameter $\alpha(t)$ denotes the fraction reduction in the contact rate from transmission-reduction intervention. We assume that $\alpha(t) = \alpha_0 y_R (t)/M$, where $\alpha_0$ represents the maximum possible fractional reduction in the contact rate, which means $\alpha(t)$ has a linear relationship with the scale of the transmission-reducing method. The parameter $\beta$ denotes the probability that a susceptible individual becomes infected upon contact with an infectious individual. Lastly, the parameters $l_C$, $l_D$ denote the mean incubation period and the mean infectious period respectively. 

The nominal transition probability from $s^t$ to $s'$ given $a^t$ can be expressed as:
\begin{equation}
\begin{split}
    & p_{a^t s^t}^0 (s') = P(s' = (p_S,p_E,p_I) | s^t = (p_S(t), p_E(t), p_I(t)), a^t = (y_V(t), y_R(t)))  \\
    & = P(n_B(t) = Np_S(t) \times (1-\frac{y_V(t)}{L}) - Np_S) \\
    & \times P(n_C(t) = Np_S(t) \times (1-\frac{y_V(t)}{L}) + Np_E(t) - Np_S - Np_E) \\
    & \times P(n_D(t) = Np_S(t) \times (1-\frac{y_V(t)}{L}) + Np_E(t) + Np_I(t) - Np_S - Np_E - Np_I).
\end{split}
\label{true_pas}    
\end{equation} 
\item {\bf Rewards}: To represent the economic and health impact in each state for each action, we use reward matrices $\bm{r} \in \mathbb{R}^{|\mathcal{S}| \times |\mathcal{A}|}$. In this section, we will derive an expression for the components in the nominal reward matrix. We define the reward at stage $t$ to consist of the following components: 
\begin{enumerate}
    \item $c_V(t):= Q \frac{y_V(t)}{L} Np_S(t)$ is cost of vaccinations at stage $t$, where $Q$ is the unit price of vaccine.
    \item $c_R(t):= k_R y_R(t)$ is cost of implementing transmission-reduction method at stage $t$, where $k_R$ is a positive multiplier.
    \item $c_I(t):= W \mathbb{E} [Np_I(t) + n_C(t) - n_D(t)]$ is the expected total health loss and treatment cost due to infections at stage $t$, where $W$ is the health loss plus the treatment cost associated with a single infection. This formula can be seen as the cost for the expected number of infected people in the next time period.
\end{enumerate}
The nominal reward can then be expressed as:
\begin{equation}
    r_{a^t s^t}^0 = -c_V(t) - c_R(t) - c_I(t). 
\label{true_reward}
\end{equation}
\end{itemize}
In this paper, we propose and solve a distributionally robust policy under endogenous transition probability uncertainty. That is, the distribution $\mu_{as}$ of the transition probability $\bm{p}_{as}$ is not precisely known but is assumed to belong to an ambiguity set $\mathcal{D}_{as} \subseteq \mathcal{P}(\Delta({\mathcal{S})})$, where $\Delta({\mathcal{S})}$ is the probability simplex of set $\mathcal{S}$, and $\mathcal{P}(\Delta({\mathcal{S})})$ represents the set of all probability distributions with support $\Delta({\mathcal{S})}$. The objective is to find a policy $\pi: \mathcal{S} \rightarrow \mathcal A$ that determines corresponding vaccination and transmission-reducing intervention actions for different proportions of susceptible, exposed, and infectious individuals for each stage $t$. In this paper, to enhance the robustness of the model, we aim to find the optimal policy to maximize reward under the worst-case distribution $\mu_{as} \in \mathcal{D}_{as}$. That is, we consider a dynamic adversarial game between the public health decision-maker and nature, where at each stage, the public health decision-maker selects a epidemic control action $a \in \mathcal{A}$ to maximize total expected future reward while nature selects the distribution $\mu_{as}$ of $\bm{p}_{as}$ to minimize total expected future reward given the decision maker's action $a$. Note here $\mathcal{D}_{as}$ is an endogenous (or, decision-dependent) ambiguity set, i.e., it is depending on the action $a$ at state $s$. Most of the literature assumes the ambiguity set is exogenous (decision independent), however, this is not a reasonable assumption in our setting because control actions taken in each stage will significantly affect the spread of infectious diseases, and therefore affect the transmission probabilities. For example, if we take the action to vaccinate as many susceptible individuals as possible, close school, and employ social distancing, this should decrease the probability of entering a state with a higher proportion of infective
individuals. 
 
We will now derive an expression for the expected total reward in the adversarial game setting. Let $h^t = (s^1, a^1, \mu_{a^1 s^1}, \dots, s^{t-1}, a^{t-1}, \mu_{a^{t-1} s^{t-1}}, s^t)$ be the history of states and actions until stage $t$ and $H^t$ denote the set of all histories until stage $t$. The set of all history-dependent control policies for the decision maker is denoted by $\Pi = \{\pi = (\pi^1, \dots, \pi^{T-1}) \hspace{1mm}|\hspace{1mm} \pi^t: H^t \xrightarrow{} \mathcal{A}, \hspace{2mm} \forall t \in \{1, \dots, {T-1}\}\}$. Let $\tilde{h}^t = (s^1, a^1, \mu_{a^1 s^1}, \dots, s^{t-1}, a^{t-1},$ $ \mu_{a^{t-1} s^{t-1}}, s^t, a^t)$ be the extended history until stage $t$, with action $a_t$, and $\tilde{H}^t$ denote the set of all extended histories until stage $t$. The set of nature's admissible policies are defined as $\Gamma = \{\gamma = (\gamma^1, \dots, \gamma^{T-1}) \hspace{1mm} |\hspace{1mm} \gamma^t: \tilde{H}^t \xrightarrow{} \mathcal{D}_{a^t s^t}, \hspace{2mm} \forall t \in \{1, \dots, T-1\} \}$. Given a strategy pair $(\pi, \gamma) \in (\Pi \times \Gamma)$, we define the expected total rewards as 
\begin{equation}
    R_s[\pi, \gamma] = \mathbb{E}_{\gamma} \Bigg[\sum_{t=1}^{T-1} \big(\lambda^{t-1} r_{a^t s^t}\big) + \lambda^{T-1} \bar{R}(s^T) \hspace{1mm} | \hspace{1mm} s^1 = s\Bigg], \label{reward}
\end{equation}
where $a^t = \pi^t (h^t)$, $\mathbb{E}_\gamma$ denotes the expectation with respect to the probability measure induced by nature's strategy $\gamma$, $s$ is the initial state and $\bar{R}$ is a bounded terminal reward function. To obtain the values of the instantaneous reward, $r_{a^ts^t}$, we use linear regression with the nominal reward $r_{a^ts^t}^0$ defined in $\eqref{true_reward}$. We describe this procedure in more detail in Section \ref{sec:formulate_as_mip}.

Our problem can be modeled as a zero-sum two-player dynamic game problem, where the public health decision maker's objective improves if and only if nature's objective gets worse. Thus, the desired epidemic control policy can be obtained by solving the following optimization problem:
\begin{equation}
    \max_{\pi \in \Pi} \min_{\gamma \in \Gamma} R_s [\pi, \gamma]. 
\label{drmdp}
\end{equation}

\section{Discretization}
\label{sec:discretization}

In this section, we describe the process of discretizing the state space and transition probabilities for our DRMDP model.

To discretize the state space, we consider the cube $H_{\mathcal{S}}$, which contains the continuous state space $\mathcal{S}$. The cube $H_{\mathcal{S}}$ is defined as $H_{\mathcal{S}} := \{(p_S, p_E, p_I) \in \mathbb{R}_+^3 \hspace{1mm}|\hspace{1mm} p_S \leq 1, p_E \leq 1, p_I \leq 1\}$. It is important to note that by construction, $H_{\mathcal{S}}$ may contain some combinations of $(\tilde{p}_S, \tilde{p}_E, \tilde{p}_I)$ that are not in the actual state space $\mathcal{S}$. However, defining $H_{\mathcal{S}}$ as a cube allows us to easily partition it into smaller cubes with equal volume. Each smaller cube has an edge length of $\frac{1}{Y}$, resulting in a total of $Y^3$ equal-volume cubes.

To further partition each small cube into simplexes, we adopt the Kuhn triangulation method \citep{moore_1992_kuhn}. Kuhn triangulation is commonly used in the discretization of MDPs due to its efficient computation of interpolation weights \citep{scott_1996_kuhn_mdp, munos_2002_kuhn_mdp}. By applying Kuhn triangulation, we divide each small cube into six equal-volume simplexes.

As a result of this discretization process, the state space after discretization is represented by $\tilde{\mathcal{S}} = \{(\tilde{p}_S, \tilde{p}_E, \tilde{p}_I) \hspace{1mm}|\hspace{1mm} \tilde{p}_S, \tilde{p}_E, \tilde{p}_I \in \{\frac{0}{Y}, \frac{1}{Y}, \dots, \frac{Y}{Y}\}\}$. For simplicity, we use the notation $\xi_1, \dots, \xi_{|\tilde{\mathcal{S}}|}$ to represent the discrete states in $\tilde{\mathcal{S}}$. It is important to note that any state $s \in \mathcal{S} \backslash \tilde{\mathcal{S}}$ is included in exactly one simplex, and every corner state $\xi \in \tilde{\mathcal{S}}$ is included in at least one simplex.

We denote the union of simplexes that include the corner state $\xi \in \tilde{\mathcal{S}}$ as $U(\xi)$, and the simplex that includes state $s \in \mathcal{S} \backslash \tilde{\mathcal{S}}$ as $B(s)$. Moreover, every state within a simplex $B$ can be expressed as a convex combination of the corner states of that simplex. The set of corner states for simplex $B$ is denoted as $C(B)$.

Figure \ref{fig:parta} shows the representation of the state space as a unit cube, and Figure \ref{fig:partb} shows the partitioning of the state space for the case when $Y=4$, which shows the unit cube split into $64$ equal volume cubes in this example. 

\begin{figure}[H]
\centering
  \begin{subfigure}{0.4\textwidth}
    \includegraphics[width=1.1\linewidth]{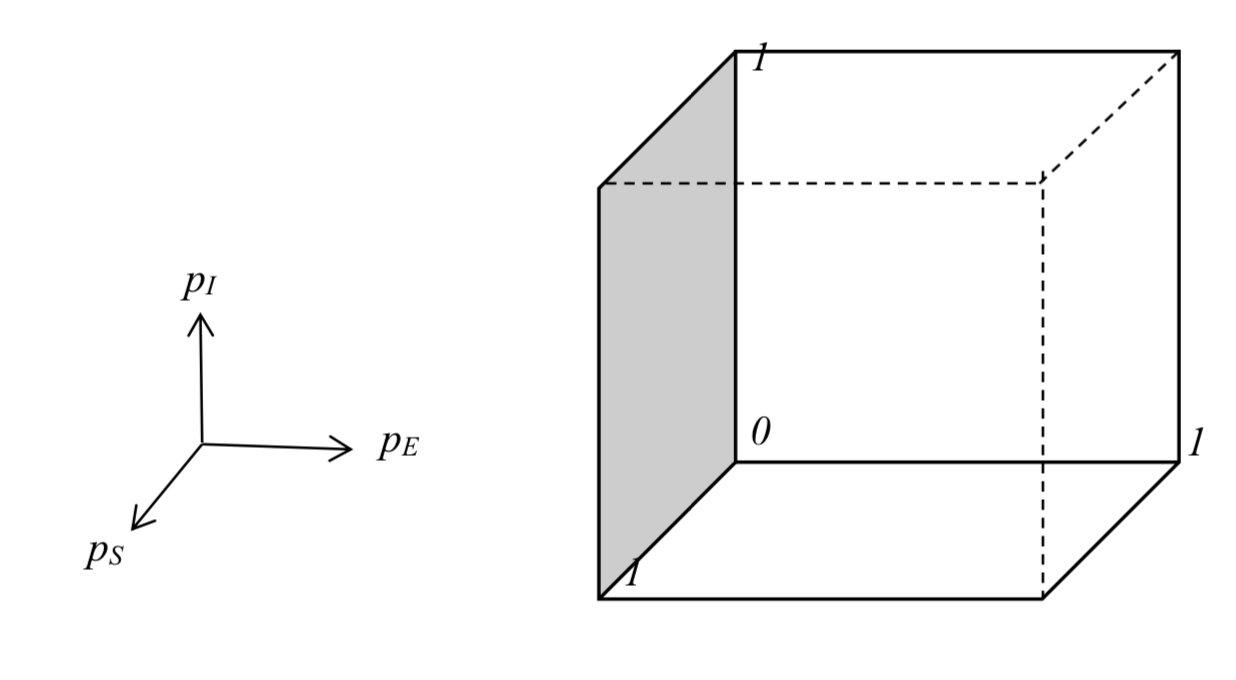}
    \caption{} \label{fig:parta}
  \end{subfigure}%
  \hspace{1cm}
  \begin{subfigure}{0.4\textwidth}
    \includegraphics[width=0.8\linewidth]{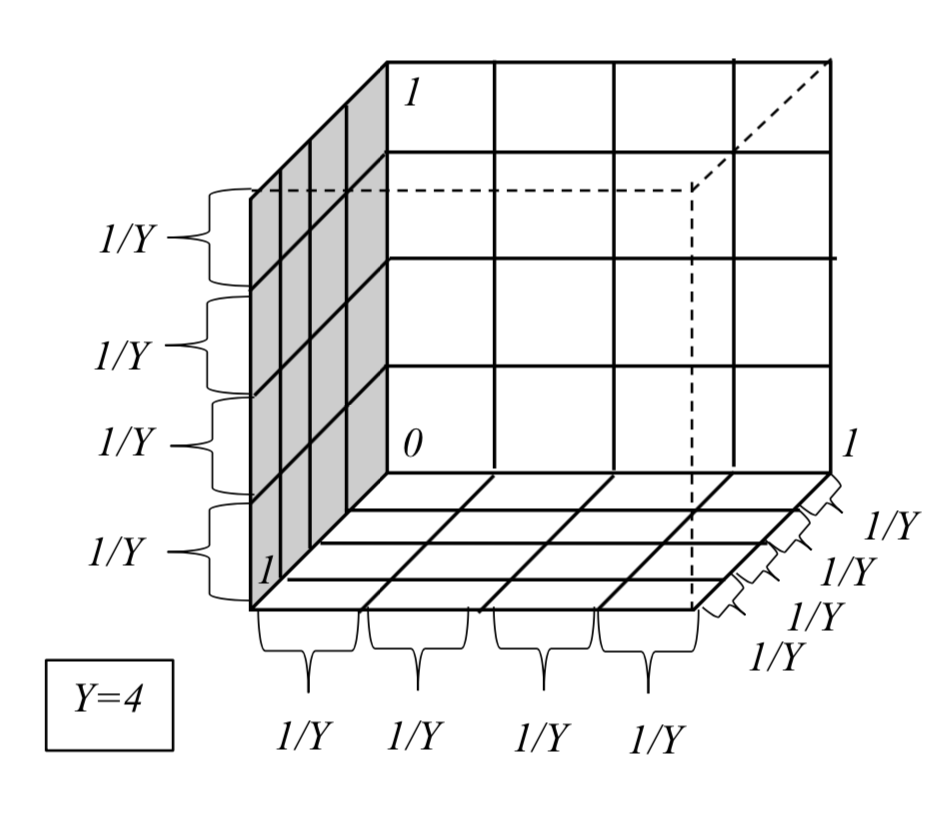}
    \caption{} \label{fig:partb}
  \end{subfigure}%
\caption{Partition of State Space where $Y=4$ }
\label{partition}
\end{figure}

Figure \ref{kt} represents the Kuhn triangulation of one of the smaller cubes shown in Figure \ref{fig:partb}. Figure \ref{fig:kta} shows the lines in which each simplex is divided by. And Figure \ref{fig:ktb} shows the separated $6$ simplexes. From the example in Figure \ref{fig:ktb}, we can see that $B(s) = III$, $U(\xi_0) = \{I,II,III,IV,V,VI\}$,         $U(\xi_1)= \{II,IV\}$,    $C(I) = \{\xi_0,\xi_4,\xi_5,\xi_7\}$, and  $C(II) = \{\xi_0,\xi_1,\xi_5,\xi_7\}.$
\begin{figure}[H]
\centering
  \begin{subfigure}{0.4\textwidth}
    \includegraphics[width=0.7\linewidth]{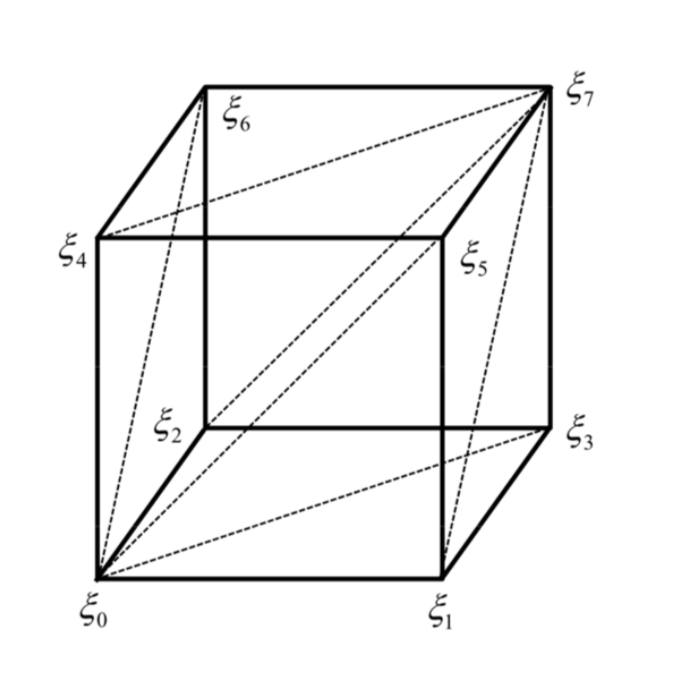}
    \caption{} \label{fig:kta}
  \end{subfigure}%
  \begin{subfigure}{0.4\textwidth}
    \includegraphics[width=0.7\linewidth]{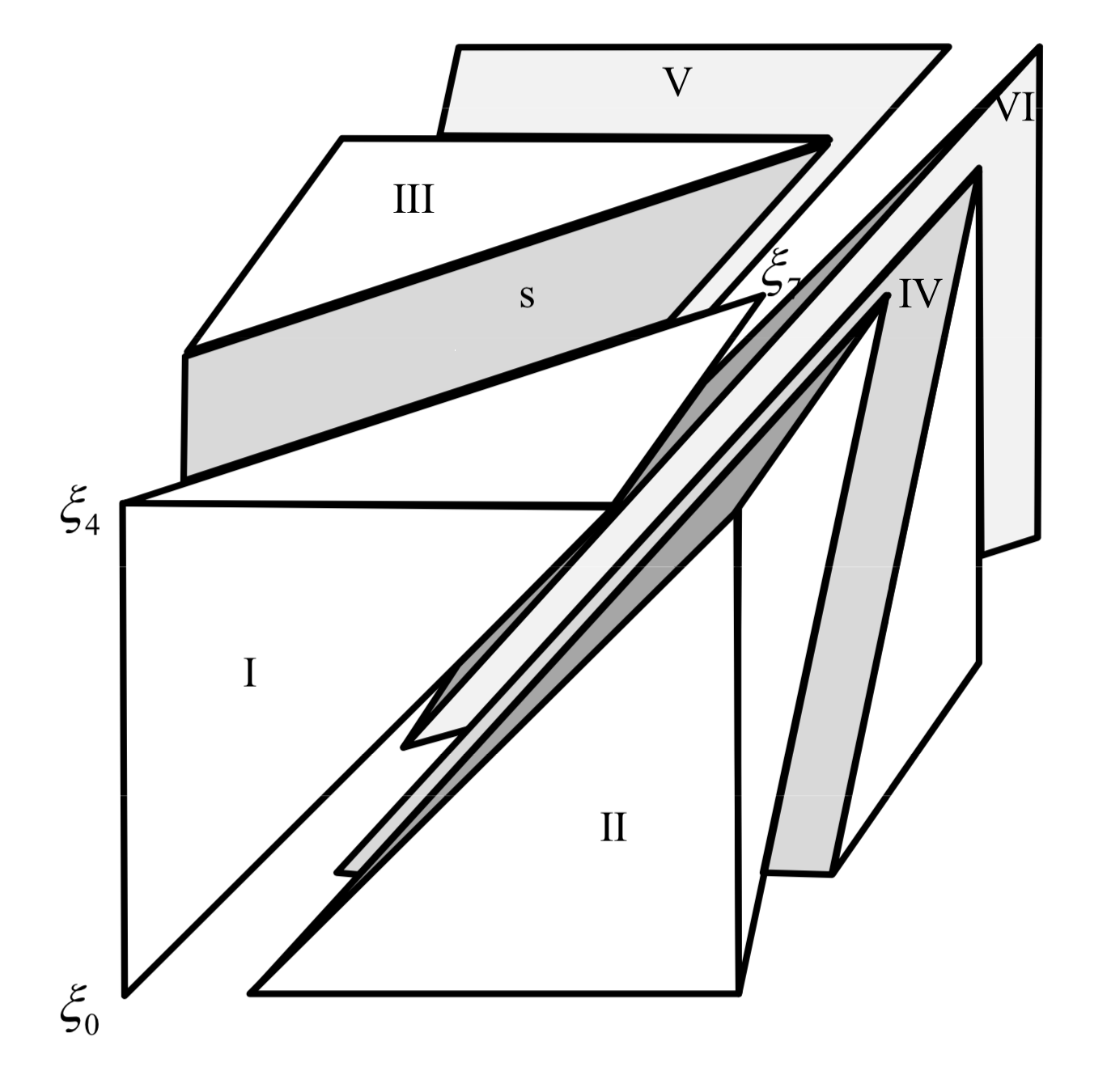}
    \caption{} \label{fig:ktb}
  \end{subfigure}%
\caption{Kuhn Triangulation \citep{nouri2011efficient}}
\label{kt}
\end{figure}

Therefore, the convex combination that represents $s \in B$ is 
$$s = \sum_{{\xi} \in C(B(s))} \theta_{s}^{\xi}  {\xi},\text{where} \sum_{{\xi} \in C(B(s))} \theta_{s}^{\xi} = 1. $$ $\theta_{s}^{\xi}$ is the weight of each $\xi$ in the convex combination. 
Based on the state space discretization, we define the discrete nominal reward and discrete terminal reward to be: $\tilde{r}_{a\xi_i}^0 = r_{a\xi_i}^0$ if $\xi_i \in \mathcal{S}$, and $\tilde{r}_{a\xi_i}^0 = 0$ otherwise; $\tilde{q}_R(\xi_i) = \bar{R}(\xi_i)$ if $\xi_i \in \mathcal{S}$, and $\tilde{q}_R(\xi_i) = 0$ otherwise, respectively. And we define the discrete nominal transition probability between $\xi_i, \xi_j \in \tilde{\mathcal{S}}$:  
\begin{itemize}
    \item If $\xi_i \in \mathcal{S}:$ 
\begin{equation}
    \tilde{p}^0_{a\xi_i} (\xi_j) :=  \int_{s \in U(\xi_j) \cap \mathcal{S}} \theta_{s}^{\xi_j} p^0_{a\xi_i} (s) ds.
\label{tilde_pas}
\end{equation}
    \item If $\xi_i \notin \mathcal{S}$: $\tilde{p}^0_{a\xi_i} (\xi_j) = 1$ if $\xi_i = \xi_j$; $\tilde{p}^0_{a\xi_i} (\xi_j) = 0$ if $\xi_i \ne \xi_j$.
\end{itemize}

We note that the discrete nominal transition probability is well defined since $\sum_{\xi \in \tilde{\mathcal{S}}} \tilde{p}^0_{a\xi_i} (\xi) = \sum_{\xi \in \tilde{\mathcal{S}}} \int_{s \in U(\xi) \cap \mathcal{S}} \theta_{s}^{\xi} p^0_{a\xi_i} (s) ds = 
\int_{s \in \mathcal{S}} \sum_{\xi \in C(B(s))} \theta_{s}^{\xi} p^0_{a\xi_i} (s) ds = \int_{s \in \mathcal{S}}p^0_{a\xi_i} (s) ds = 1$.

\section{Model Reformulation}
\label{sec:formulate_as_mip}

In this section, we consider DRMDP formulations with a decision-dependent ambiguity set, which can handle the case where only limited information of the epidemic statistics is available. To solve the decision-dependent uncertainty, we adopt the approach used in \cite{yu_2020_endogenuous, basciftci_2020_endogenuous} and reformulate the distributionally robust Bellman equation as a mixed integer programming (MIP) using McCormick or unary envelopes with linear decision rules. 

\subsection{Distributionally Robust Bellman Equation}
We first rewrite the expected reward-to-go function \eqref{drmdp} by replacing the value function $R_\xi [\pi, \gamma]$ by \eqref{reward}:
\begin{equation*}
V^t(\xi) = \max_{\pi \in \Pi} \min_{\gamma \in \Gamma} \mathbb{E}_{\gamma} \Bigg[\sum_{i=t}^{T-1} \lambda^{i-t} \tilde{r}_{a^i \xi^i} + \lambda^{T-t} \tilde{q}_{R}(\xi^T) \hspace{1mm} | \hspace{1mm} \xi^t = \xi \Bigg].
\end{equation*}
We assume that without loss of generality, maximizing the expected reward under worst admissible transition probability distribution is equivalent to solving the following distributionally robust Bellman equations: 
\begin{equation}
V^t(\xi) = \max_{a \in \mathcal{A}} \min_{\mu_{a\xi} \in \mathcal{D}_{a \xi}} \mathbb{E}_{\bm{p}_{a\xi} \sim \mu_{a\xi}} [\tilde{r}_{a\xi} + \lambda \bm{p}_{a\xi}^{\mbox{\tiny T}} \bm{V}^{t+1}],\label{drmdp_bellman}
\end{equation}

\begin{equation*}
Q^t (\xi,a) = \min_{\mu_{a\xi} \in \mathcal{D}_{a\xi}} \mathbb{E}_{\bm{p}_{a\xi} \sim \mu_{a\xi}} [\tilde{r}_{a\xi} + \lambda \bm{p}_{a\xi}^{\mbox{\tiny T}} \bm{V}^{t+1}], 
\end{equation*}

where $V^t(.)$ and $Q^t(., .)$ represent the distributionally robust state-value function and action-value function respectively, and $\bm{V}^{t+1} = (V^{t+1}(\xi'), \xi' \in \mathcal{\tilde{S}})^T$. 
The optimal action for the decision maker at stage $t$ then is: \begin{equation*}a^*=\argmax_{a \in \mathcal{A}} Q^t(\xi,a), \end{equation*} whereas the optimal distribution chosen by the nature is: \begin{equation*}\mu^*=\argmin_{\mu_{a\xi} \in \mathcal{D}_{a\xi}} \mathbb{E}_{\bm{p}_{a\xi} \sim \mu_{a\xi}} [\tilde{r}_{a\xi} + \lambda \bm{p}_{a\xi}^{\mbox{\tiny T}} \bm{V}^{t+1}]. \end{equation*}

\subsection{Ambiguity Set with First Moment Information}
\label{drmdp_dd1}
In practice, it is often the case that one has limited information about the transition probability distribution $\mu_{a\xi}$. In such situations, one can rely on the estimates of transition probability based on historical records or expertise domain knowledge. However, even if the mean value of transition probabilities can be estimated, the true distribution is still ambiguous. Therefore, we construct an ambiguity set to handle this uncertainty. We consider such an ambiguity set where the mean vector of the transition probabilities is restricted by decision-dependent bounds, and the true distribution of the transition probabilities can run adversely within the ambiguity set. The ambiguity set is constructed as follows:
\begin{subequations}
\begin{align}
\mathcal{D}_{a\xi} := \bigg\{ \mu_{a\xi} \in \mathcal{P}(\Delta(\mathcal{\tilde{S}})) \hspace{1mm}: \hspace{1mm}  & \bm{p}_{a\xi} \sim \mu_{a\xi} , \label{set_1_subeq1}\\
                 &  \tilde{\bm{\eta}}^L_{a\xi} \le \mathbb{E}[\bm{p}_{a\xi}] \le \tilde{\bm{\eta}}^U_{a\xi} \bigg\} \label{set_1_subeq2},
\end{align}
\label{set_1_mean_matching}
\end{subequations}
where $\bm{\tilde{\eta}}^L_{a\xi} $ and $\bm{\tilde{\eta}}^U_{a\xi}$ are the decision dependent lower and upper bounds, respectively.

\subsection{Reformulation of Bellman Equations}
In order to reformulate the Bellman equation \eqref{drmdp_bellman} into tractable formulations, under the setting of ambiguity set shown in \eqref{set_1_mean_matching}, we relax the hard constraint \eqref{set_1_subeq2} into a soft constraint and adjust the objective function by penalizing constraint violations. Then \eqref{drmdp_bellman} with the ambiguity set in \eqref{set_1_mean_matching} can be reformulated as:
\begin{subequations}
    \begin{align}
        \min_{\mu_{a\xi}, \bm{x} \ge 0} & \tilde{r}_{a\xi} + \int_{\Delta({\mathcal{\tilde{S}})}} \lambda \bm{p}_{a\xi}^{\mbox{\tiny T}} \bm{V}^{t+1} d \mu_{a\xi} (\bm{p}_{a\xi}) + k\bm{1}^{\mbox{\tiny T}} \bm{x}  \label{inner_prob_obj}\\
        s.t. \hspace{5mm} & \int_{\Delta({\mathcal{\tilde{S}})}} d \mu_{a\xi} (\bm{p}_{a\xi})  = 1, \label{inner_prob_prob}\\
        & \int_{\Delta({\mathcal{\tilde{S}})}} \bm{p}_{a\xi} d \mu_{a\xi} (\bm{p}_{a\xi}) - \tilde{\bm{\eta}}^U_{a\xi} \le \bm{x}, \label{inner_prob_u}\\
        & \tilde{\bm{\eta}}^L_{a\xi} - \int_{\Delta({\mathcal{\tilde{S}})}} \bm{p}_{a\xi} d \mu_{a\xi} (\bm{p}_{a\xi}) \le \bm{x},
        \label{inner_prob_l}
    \end{align}
    \label{inner_problem_set_1}
\end{subequations}
where the objective \eqref{inner_prob_obj} consists of the initial reward, the expected future reward over probability distribution $\mu_{a\xi}$ and a penalty term $k\bm{1}^{\mbox{\tiny T}} \bm{x}$. Here, $k$ represents a user-specified penalty coefficient to penalize the violation of constraint \eqref{set_1_subeq2}. \eqref{inner_prob_prob} represents the constraint \eqref{set_1_subeq1},  \eqref{inner_prob_u} and \eqref{inner_prob_l} are relaxation of \eqref{set_1_subeq2}. When $k\rightarrow \infty$, \eqref{inner_problem_set_1} will be equivalent to \eqref{drmdp_bellman} as $x$ will be $0$, i.e., no violation for constraint \eqref{set_1_subeq2}.

By utilizing the Lagrangian dualization approach, we reformulate the \eqref{inner_problem_set_1}, and we show the reformulation in the following theorem:

\begin{restatable}{thm}{thmambiguitysetone} If for any $a \in \mathcal{A}$, the ambiguity set defined in \eqref{set_1_mean_matching} is nonempty, then \eqref{inner_problem_set_1} can be reformulated as: 
\begin{equation}
    \begin{split}
        V^{t}(\xi) = \max_{a \in \mathcal{A}, \bm{w}, \bm{u}, q} & \tilde{r}_{a\xi} + q - \bm{w}^{\mbox{\tiny T}} \bm{\tilde{\eta}}^U_{a\xi} + \bm{u}^{\mbox{\tiny T}} \bm{\tilde{\eta}}^L_{a\xi} \\
        s.t. \hspace{5mm} 
        & q\bm{1}  \le \lambda \bm{V}^{t+1} + \bm{w} - \bm{u},  \\
        & \bm{w} + \bm{u} \le k\bm{1}, \\
        & \bm{w}, \bm{u} \ge \bm{0}.
    \end{split}
    \label{drmdp_bellman_set_1}
\end{equation}
\end{restatable}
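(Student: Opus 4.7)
The plan is to treat the inner problem \eqref{inner_problem_set_1}, for a fixed action $a$ and state $\xi$, as an infinite-dimensional linear program over the non-negative measure $\mu_{a\xi}$ on $\Delta(\tilde{\mathcal{S}})$ and the slack vector $\bm{x}\ge\bm 0$, and to apply Lagrangian duality. I would introduce multipliers $q\in\mathbb{R}$ for the probability-mass equality \eqref{inner_prob_prob} (using sign convention $q(1-\int d\mu)$ so that $q$ appears with a $+$ in the dual objective), $\bm{w}\ge\bm 0$ for \eqref{inner_prob_u}, and $\bm{u}\ge\bm 0$ for \eqref{inner_prob_l}, and then collect terms. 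A routine regrouping yields the Lagrangian
\begin{equation*}
L=\tilde r_{a\xi}+q-\bm w^{\mbox{\tiny T}}\tilde{\bm\eta}^U_{a\xi}+\bm u^{\mbox{\tiny T}}\tilde{\bm\eta}^L_{a\xi}+(k\bm 1-\bm w-\bm u)^{\mbox{\tiny T}}\bm x+\int_{\Delta(\tilde{\mathcal{S}})}\!\bigl[(\lambda\bm V^{t+1}+\bm w-\bm u)^{\mbox{\tiny T}}\bm p-q\bigr]\,d\mu_{a\xi}(\bm p),
\end{equation*}
so that the only coupling to the primal variables $(\bm x,\mu_{a\xi})$ is through the last two terms.

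Next I would compute the dual function by minimizing $L$ over $\bm{x}\ge\bm 0$ and over non-negative (not yet normalized) measures $\mu_{a\xi}$. The minimization in $\bm x$ is finite iff $k\bm 1-\bm w-\bm u\ge \bm 0$, giving the constraint $\bm w+\bm u\le k\bm 1$; otherwise the infimum is $-\infty$. For the minimization in $\mu_{a\xi}\ge 0$, the linear functional $\int h(\bm p)\,d\mu_{a\xi}$ with $h(\bm p):=(\lambda\bm V^{t+1}+\bm w-\bm u)^{\mbox{\tiny T}}\bm p-q$ is bounded below (by $0$, attained at $\mu_{a\xi}\equiv 0$) iff $h(\bm p)\ge 0$ for every $\bm p\in\Delta(\tilde{\mathcal{S}})$, since otherwise a scaled point mass at any $\bm p$ with $h(\bm p)<0$ drives the value to $-\infty$. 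Because $h$ is linear on the simplex $\Delta(\tilde{\mathcal{S}})$, non-negativity on the whole simplex is equivalent to non-negativity at each vertex, which is exactly the componentwise constraint $q\bm 1\le\lambda\bm V^{t+1}+\bm w-\bm u$. Assembling the surviving terms gives the dual objective $\tilde r_{a\xi}+q-\bm w^{\mbox{\tiny T}}\tilde{\bm\eta}^U_{a\xi}+\bm u^{\mbox{\tiny T}}\tilde{\bm\eta}^L_{a\xi}$ with the claimed constraints, and maximizing finally over $a\in\mathcal{A}$ yields the formulation \eqref{drmdp_bellman_set_1}.

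The remaining (and genuinely delicate) step is to justify strong duality, since the primal lives in the space of measures and a priori only weak duality is automatic. I would argue as follows. The primal is a generalized moment/capacity problem whose value is finite (the objective is bounded on $\Delta(\tilde{\mathcal{S}})$ since $\bm V^{t+1}$ is bounded, and the penalty term in $\bm x$ is linear), and the penalty variables $\bm x$ ensure that the feasible set has non-empty relative interior whenever the ambiguity set $\mathcal{D}_{a\xi}$ is non-empty: any $\mu_{a\xi}\in\mathcal{D}_{a\xi}$ together with any strictly positive $\bm x$ gives a Slater point for the relaxed constraints \eqref{inner_prob_u}--\eqref{inner_prob_l}. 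Under this constraint qualification, strong duality for moment problems (e.g., Shapiro-type results for semi-infinite/conic linear programs, or the duality theorem applied to the space of signed measures paired with continuous functions on the compact set $\Delta(\tilde{\mathcal{S}})$) closes the gap. The main obstacle I anticipate is precisely this duality-gap argument: one has to be careful that the dual is attained and that the exchange of inf and sup is valid in the infinite-dimensional setting. Everything else reduces to bookkeeping of signs and the linearity of $h$ on the simplex.
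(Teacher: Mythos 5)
Your proposal is correct and follows essentially the same route as the paper: Lagrangian dualization of the inner moment problem with multipliers $(q,\bm w,\bm u)$, a Slater-type argument exploiting the slack variables $\bm x$ to establish strong duality, and a final maximization over $a\in\mathcal{A}$. The only difference is cosmetic: where you collapse the semi-infinite constraint by noting that the affine function $h$ attains its minimum over $\Delta(\tilde{\mathcal{S}})$ at a vertex, the paper performs a second LP dualization of $\min_{\bm p\in\Delta(\tilde{\mathcal{S}})}h(\bm p)$ and then identifies the two scalar multipliers at optimality — both yield $q\bm 1\le\lambda\bm V^{t+1}+\bm w-\bm u$ — and your explicit attention to strong duality in the space of measures is, if anything, more careful than the paper's.
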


Here, $\bm{1},\bm{V}^{t+1},\bm{w},\bm{u},\bm{\tilde{\eta}}_{a\xi}^U$ and $\bm{\tilde{\eta}}_{a\xi}^L$ are vectors with length $|\tilde{\mathcal{S}}|$. Therefore, the notation $\bm{w}^T\bm{\tilde{\eta}}_{a\xi}^U$ can be understood as

$$\bm{w}^T\bm{\tilde{\eta}}_{a\xi}^U:=\sum_{\xi' \in \mathcal{\tilde{S}}}w(\xi')\tilde{\eta}_{a\xi}^U(\xi'), $$
where $w(\xi')$ and $\tilde{\eta}_{a\xi}^U(\xi')$ represent the value of $\bm{w}^T$ and $\tilde{\bm{\eta}}_{a\xi}^U$ for state $\xi'$, respectively. $\bm{u}^T\bm{\tilde{\eta}}_{a\xi}^L$ is defined similarly. The proof of this theorem is provided in the online supplement.

To approximate \eqref{drmdp_bellman_set_1}, we adopt the linear decision rule \citep{holt_1960_ldr}, that is, we assume that $\bm{\tilde{\eta}}^U_{as}$, $\bm{\tilde{\eta}}^L_{as}$ and $\tilde{r}_{as}$ are linear functions of $a$:
\begin{align}
 &   \tilde{\eta}^U_{a\xi} (\xi') = \rho^\xi_{0}(\xi') + \sum_{i=1}^{N_a} \rho^\xi_{i}(\xi')a_i, \hspace{5mm} \forall{} \xi' \in \mathcal{\tilde{S}}, \label{affine_pas_u} \\
 &   \tilde{\eta}^L_{a\xi} (\xi') = \sigma^\xi_{0}(\xi') + \sum_{i=1}^{N_a} \sigma^\xi_{i}(\xi')a_i, \hspace{5mm} \forall{} \xi' \in \mathcal{\tilde{S}}, \label{affine_pas_l} \\
 &   \tilde{r}_{a\xi} =  \epsilon_0^\xi + \sum_{i=1}^{N_a} \epsilon_i^\xi a_i, \label{affine_r}
\end{align}
where $a_i$ represents the $i$-th dimension of action $a$, and $N_a$ is the total dimension of each $a$ in $\mathcal{A}$ ($N_a = 2$ in the case where we consider the two types of actions: vaccination and transmission-reduction intervention). To obtain the coefficients $\bm{\epsilon}^\xi$ in \eqref{affine_r}, we apply linear regression with training data $ \{\alpha_1, \dots, \alpha_n\}$ and target values $\{\tilde{r}_{\alpha_1 \xi}^0, \dots,  \tilde{r}_{\alpha_n \xi}^0 \}$, where $\tilde{r}_{a \xi}^0$ is the discrete nominal reward. Similarly, we apply linear regression with training data $\{\alpha_1, \dots, \alpha_n\}$ and target values
$\{\bm{\tilde{\eta}}^U_{{\alpha_1}\xi}, \dots, \bm{\tilde{\eta}}^U_{{\alpha_n}\xi}\}$ and $\{\bm{\tilde{\eta}}^L_{{\alpha_1}\xi}, \dots, \bm{\tilde{\eta}}^L_{{\alpha_n}\xi}\}$ to obtain coefficients $\bm{\rho}^\xi$ and $\bm{\sigma}^\xi$ in \eqref{affine_pas_u} and \eqref{affine_pas_l} respectively. Here we set the target values $\bm{\tilde{\eta}}^U_{{\alpha_i}\xi} = \bm{\tilde{p}}_{\alpha_i \xi}^0 + \bm{\delta}$ and  $\bm{\tilde{\eta}}^L_{{\alpha_i}\xi} = \bm{\tilde{p}}_{\alpha_i \xi}^0 - \bm{\delta}$ for $i = 1, \dots, n$, where $\bm{\delta} > 0$ is a pre-defined error bound and $\bm{\tilde{p}}_{a\xi}^0$ is the discrete nominal transition probability defined in \eqref{tilde_pas}. 

Thus, $\bm{w}^{\mbox{\tiny T}} \bm{\tilde{\eta}}^U_{a\xi}$,  and $\bm{u}^{\mbox{\tiny T}} \bm{\tilde{\eta}}^L_{a\xi}$ (which are terms of the objective of \eqref{drmdp_bellman_set_1}) can be expressed as: 
\begin{equation*}
\begin{split}
    & \bm{w}^{\mbox{\tiny T}} \bm{\tilde{\eta}}^U_{a\xi} = \sum_{\xi' \in \mathcal{\tilde{S}}}  \Big(\rho^\xi_{0}(\xi') + \sum_{i=1}^{N_a} \rho^\xi_{i}(\xi')a_i\Big) w(\xi'), \\
    & \bm{u}^{\mbox{\tiny T}} \bm{\tilde{\eta}}^L_{a\xi} = \sum_{\xi' \in \mathcal{\tilde{S}}}  \Big(\sigma^\xi_{0}(\xi') + \sum_{i=1}^{N_a} \sigma^\xi_{i}(\xi')a_i\Big) u(\xi').
\end{split}
\end{equation*}
To linearize the bilinear terms $a_i w(\xi')$ and $a_i u(\xi')$, we adopt two different methods: McCormick envelope relaxation and exact unary expansion. The corresponding mixed integer programming (MIP) formulations are presented in Corollary \ref{coro_mccormick} and Corollary \ref{coro_unary}, respectively, which we introduce below.

\noindent\textbf{McCormick Envelope Relaxation:}
  We replace the bilinear terms $a_i w(\xi')$ with $m^0_{i}(\xi')$ and $a_i u(\xi')$ with $m^1_{i}(\xi')$ by using McCormick envelopes \citep{mccormick_1976_envelope} $M^0_{i}(\xi')$ and $M^1_{i}(\xi')$ respectively for all $i = 1, \dots, N_a$, $\xi' \in \mathcal{\tilde{S}}$. We also utilize upper and lower bounds for decision variables $a_i,w(\xi'),$ and $u(\xi')$, which we denote as $\bar{a}_i,\bar{w}_{\xi'},\bar{u}_{\xi'}$ and $\underline{a}_i,\underline{w}_{\xi'},\underline{u}_{\xi'}$, respectively. In our setting, it is clear that $\bar{a}_1=L,\bar{a}_2=M,\underline{a}_1=\underline{a}_2=1,\bar{w}_{\xi'}=\bar{u}_{\xi'} = k,$ and $\underline{w}_{\xi'}=\underline{u}_{\xi'}=0 \hspace{2mm} \forall \xi' \in \mathcal{\tilde{S}}$, which leads us to the following corollary: 
  \begin{corollary}
\label{coro_mccormick}
 If for any $a \in \mathcal{A}$, the ambiguity set defined in \eqref{set_1_mean_matching} is nonempty, then the Bellman equation \eqref{drmdp_bellman} can be approximated as the following MIP formulation:
\begin{subequations} \label{drmdp_bellman_mip_set_1}
\begin{align}
            V^{t}(\xi) = \max_{a \in \mathcal{A}, \bm{w}, \bm{u}, q} & \tilde{r}_{a\xi} + q - \sum_{\xi' \in \mathcal{\tilde{S}}}  \bigg(\rho^\xi_{0}(\xi') w(\xi') + \sum_{i=1}^{N_a} \rho^\xi_{i}(\xi')m^0_{i}(\xi')\bigg) + \sum_{\xi' \in \mathcal{\tilde{S}}} \bigg(\sigma^\xi_{0}(\xi') u(\xi') \notag \\ 
            &+ \sum_{i=1}^{N_a} \sigma^\xi_{i}(\xi')m^1_{i}(\xi')\bigg)  \\
        s.t. \hspace{5mm} 
        & q\bm{1}  \le \lambda \bm{V}^{t+1} + \bm{w} - \bm{u},  \\
        & \bm{w} + \bm{u} \le k\bm{1}, \\
        & \bm{w}, \bm{u} \ge \bm{0}, \\
        & (m^0_{i}(\xi'), a_i, w(\xi')) \in M^0_{i}(\xi'), \hspace{5mm} \forall{} i \in [N_a], \xi' \in \mathcal{\tilde{S}}, \\
        & (m^1_{i}(\xi'), a_i, u(\xi')) \in M^1_{i}(\xi'), \hspace{5mm} \forall{} i \in [N_a], \xi' \in \mathcal{\tilde{S}},
\end{align}
\end{subequations}
where $[N_a]$ denotes the set $\{1, \dots, N_a\}$, and
\begin{equation*}
\begin{aligned}
    M^0_{i}(\xi') = \{&(m^0_{i}(\xi'), a_i, w(\xi')): \\ & m^0_{i}(\xi') \ge \underline{a}_i w(\xi') + a_i \underline{w}_{\xi'} - \underline{a}_i\underline{w}_{\xi'},  m^0_{i}(\xi') \ge \bar{a}_i w(\xi') + a_i \bar{w}_{\xi'} - \bar{a}_i\bar{w}_{\xi'}, \\
    & m^0_{i}(\xi') \le \bar{a}_i w(\xi') + a_i \underline{w}_{\xi'} - \bar{a}_i\underline{w}_{\xi'}, m^0_{i}(\xi') \le a_i \bar{w}_{\xi'} + \underline{a}_i w(\xi')- \underline{a}_i\bar{w}_{\xi'}. 
    \}, \\
    M^1_{i}(\xi') = \{&(m^1_{i}(\xi'), a_i, u(\xi')): \\ & m^1_{i}(\xi') \ge \underline{a}_i u(\xi') + a_i \underline{u}_{\xi'} - \underline{a}_i\underline{u}_{\xi'},  m^1_{i}(\xi') \ge \bar{a}_i u(\xi') + a_i \bar{u}_{\xi'} - \bar{a}_i\bar{u}_{\xi'}, \\
    & m^1_{i}(\xi') \le \bar{a}_i u(\xi') + a_i \underline{u}_{\xi'} - \bar{a}_i\underline{u}_{\xi'}, m^1_{i}(\xi') \le a_i \bar{u}_{\xi'} + \underline{a}_i u(\xi') - \underline{a}_i\bar{u}_{\xi'}. 
    \}. \\
\end{aligned}
\end{equation*}
\end{corollary}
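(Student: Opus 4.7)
The plan is to start from the reformulation \eqref{drmdp_bellman_set_1} established in the preceding theorem, substitute the linear decision rules \eqref{affine_pas_u}–\eqref{affine_r} into its objective, and then linearize the bilinear products that arise by the standard McCormick envelope construction.

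First, I would substitute \eqref{affine_pas_u} and \eqref{affine_pas_l} for $\tilde{\eta}^U_{a\xi}(\xi')$ and $\tilde{\eta}^L_{a\xi}(\xi')$ inside the inner products $\bm{w}^{\mbox{\tiny T}} \bm{\tilde{\eta}}^U_{a\xi}$ and $\bm{u}^{\mbox{\tiny T}} \bm{\tilde{\eta}}^L_{a\xi}$ that appear in \eqref{drmdp_bellman_set_1}, using the component-wise expansions already stated in the paragraph just below Theorem~\ref{thmambiguitysetone}. Distributing the sums separates the objective into an affine part (in $a$, $\bm{w}$, $\bm{u}$, $q$) and the bilinear terms $\rho^\xi_i(\xi')\,a_i w(\xi')$ and $\sigma^\xi_i(\xi')\,a_i u(\xi')$ for $i\in [N_a]$, $\xi' \in \tilde{\mathcal{S}}$. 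All other constraints of \eqref{drmdp_bellman_set_1} remain unchanged since they are independent of $a$.

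Second, I would identify explicit bounds on each factor of the bilinear terms. The action bounds $\underline{a}_1=\underline{a}_2=1$, $\bar{a}_1=L$, $\bar{a}_2=M$ come from the definition of $\mathcal{A}$, while the bounds $0\le w(\xi'), u(\xi') \le k$ for every $\xi'\in\tilde{\mathcal{S}}$ follow immediately from the constraints $\bm{w},\bm{u}\ge 0$ and $\bm{w}+\bm{u}\le k\bm{1}$ of \eqref{drmdp_bellman_set_1}. I then introduce auxiliary variables $m^0_i(\xi')$ and $m^1_i(\xi')$ intended to represent $a_i w(\xi')$ and $a_i u(\xi')$, and impose the four McCormick inequalities \citep{mccormick_1976_envelope} associated with each box $[\underline{a}_i,\bar{a}_i]\times[\underline{w}_{\xi'},\bar{w}_{\xi'}]$ and $[\underline{a}_i,\bar{a}_i]\times[\underline{u}_{\xi'},\bar{u}_{\xi'}]$. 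Plugging in the bounds listed above recovers precisely the sets $M^0_i(\xi')$ and $M^1_i(\xi')$ in the statement, and replacing $a_i w(\xi')$, $a_i u(\xi')$ in the expanded objective by $m^0_i(\xi')$, $m^1_i(\xi')$ yields the MIP \eqref{drmdp_bellman_mip_set_1}.

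Third, I would justify the word \emph{approximated}: for any feasible point of the original bilinear problem, setting $m^0_i(\xi') = a_i w(\xi')$ and $m^1_i(\xi') = a_i u(\xi')$ satisfies the four McCormick inequalities since those inequalities are valid under the box bounds, so every feasible solution of \eqref{drmdp_bellman_set_1} (after decision-rule substitution) lifts to a feasible solution of \eqref{drmdp_bellman_mip_set_1} with the same objective value. Hence the MIP optimum upper-bounds the true value $V^t(\xi)$. The main subtlety — rather than an obstacle — is that the McCormick envelope is the convex hull of $\{(x,y,xy)\}$ over a box only when one variable is binary, and is otherwise a strict relaxation; thus \eqref{drmdp_bellman_mip_set_1} is generally an approximation, not an exact reformulation, which motivates the exact unary-expansion alternative in Corollary~\ref{coro_unary}.
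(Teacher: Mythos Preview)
Your proposal is correct and follows essentially the same route as the paper: the paper does not give a separate formal proof of this corollary but derives it in the surrounding text by starting from the reformulation \eqref{drmdp_bellman_set_1} of Theorem~1, substituting the linear decision rules \eqref{affine_pas_u}--\eqref{affine_r}, and then replacing the resulting bilinear terms $a_i w(\xi')$ and $a_i u(\xi')$ by McCormick envelope variables using the stated box bounds. Your additional third paragraph, explaining why the McCormick construction yields only an outer approximation (and hence the word ``approximated''), is a welcome clarification that the paper leaves implicit.
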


\noindent \textbf{Exact Unary Expansion:} We note that the McCormick method used here provides additional relaxation of the original problem in \eqref{drmdp_bellman_set_1}. To have a more exact formulation, we utilize unary expansion \citep{gupte_2013_unary} in the following corollary: 
\begin{corollary}
\label{coro_unary}
 If for any $a \in \mathcal{A}$, the ambiguity set defined in \eqref{set_1_mean_matching} is nonempty, then the Bellman equation \eqref{drmdp_bellman} can be approximated as the following MIP formulation:
 \begin{subequations} \label{drmdp_bellman_mip_set_2}
\begin{align}
        V^{t}(\xi) = \max_{a \in \mathcal{A}, \bm{w}, \bm{u}, q} & \tilde{r}_{a\xi} + q - \sum_{\xi' \in \mathcal{\tilde{S}}}  \bigg(\rho^\xi_{0}(\xi') w(\xi') + \sum_{i=1}^{N_a} \rho^s_{i}(\xi')\sum_{j=1}^{A_i}\tau_j^i \hat{m}_j^{0i}(\xi')\bigg) \notag \\
        &\hspace{11mm}+ \sum_{\xi' \in \mathcal{\tilde{S}}} \bigg(\sigma^\xi_{0}(\xi') u(\xi') + \sum_{i=1}^{N_a} \sigma^\xi_{i}(\xi')\sum_{j=1}^{A_i}\tau_j^i \hat{m}_j^{1i}(\xi')\bigg)  \\
        s.t. \hspace{5mm} 
        & q\bm{1}  \le \lambda \bm{V}^{t+1} + \bm{w} - \bm{u},  \\
        & \bm{w} + \bm{u} \le k\bm{1}, \\
        & \bm{w}, \bm{u} \ge \bm{0}, \\
        &\sum_{j=1}^{A_i} \psi_j^{0i} = 1,\hspace{2mm}\sum_{j=1}^{A_i} \psi_j^{1i} = 1,\hspace{3mm}\forall i\in [N_a],\\
        &\psi_j^{0i},\psi_j^{1i} \in \{0,1\}, \hspace{3mm}\forall i\in [N_a], \hspace{1mm}j = 1,...,A_i\\
        & ({\hat{m}_j^{0i}(\xi')}, \psi_j^{0i}, w(\xi')) \in \hat{M}_j^{0i}(\xi'), \hspace{3mm} \forall{} i \in [N_a], \xi' \in \mathcal{\tilde{S}}, \\
        & ({\hat{m}_j^{1i}(\xi')}, \psi_j^{1i}, u(\xi')) \in \hat{M}_j^{1i}(\xi'), \hspace{3mm} \forall i \in [N_a], \xi' \in \mathcal{\tilde{S}},
\end{align}
\end{subequations}
where
\begin{equation*}
\begin{aligned}
    &\hat{M}_j^{0i}(\xi') = \{(\hat{m}_j^{0i}(\xi'), \psi_j^{0i}, w(\xi')): \hat{m}_j^{0i}(\xi') \geq 0, \hat{m}_j^{0i}(\xi') \geq w(\xi')+\bar{w}(\xi')(\psi_j^{0i}-1)\}, \\
    &\hat{M}_j^{1i}(\xi') = \{(\hat{m}_j^{1i}(\xi'),\psi_j^{1i},u(\xi')): \hat{m}_j^{1i}(\xi') \leq u(\xi'), \hat{m}_j^{1i}(\xi') \leq \bar{u}(\xi')\psi_j^{1i}\}, \\
    &\forall i\in [N_a], \hspace{1mm} j = 1,...,A_i,
\end{aligned}
\end{equation*}
where $A_i$ represents the number of actions for action type $i$, $\tau_j^i$ is the $j^{th}$ action for action type $i$ (in the epidemic control setting, $\tau_j^i$ is the $j^{th}$ item in $(0,1,2,...,A_i-1)$ for $i\in \{1,2\}$), and $\psi_j^{0i},\psi_j^{1i}$ are auxillary binary variables.
\end{corollary}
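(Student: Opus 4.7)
The plan is to build on the reformulation \eqref{drmdp_bellman_set_1} established in Theorem \ref{thmambiguitysetone}, substitute the linear decision rules \eqref{affine_pas_u}--\eqref{affine_r} into its objective, and then linearize the resulting bilinear products $a_i w(\xi')$ and $a_i u(\xi')$ via unary expansion rather than by the McCormick envelopes used in Corollary \ref{coro_mccormick}. After substitution, the objective splits into a linear part in $(q,\bm{w},\bm{u})$ plus the bilinear aggregates $-\sum_{\xi'}\sum_i \rho_i^\xi(\xi')\,a_i w(\xi')$ and $+\sum_{\xi'}\sum_i \sigma_i^\xi(\xi')\,a_i u(\xi')$, while the remaining constraints in \eqref{drmdp_bellman_set_1} involve $(q,\bm{w},\bm{u})$ only linearly and carry over unchanged. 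The box bounds $w(\xi'),u(\xi')\in[0,k]$ needed downstream follow directly from $\bm{w}+\bm{u}\le k\bm{1}$ and $\bm{w},\bm{u}\ge\bm{0}$.

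Next, I would exploit that each action coordinate $a_i$ ranges over the finite integer grid $\{\tau_1^i,\ldots,\tau_{A_i}^i\}$ to introduce binary indicators $\psi_j^{0i},\psi_j^{1i}\in\{0,1\}$ together with the SOS-1 constraints $\sum_{j=1}^{A_i}\psi_j^{0i}=\sum_{j=1}^{A_i}\psi_j^{1i}=1$, identifying $a_i=\sum_j \tau_j^i \psi_j^{0i}=\sum_j \tau_j^i \psi_j^{1i}$. Each bilinear product then becomes $a_i w(\xi')=\sum_j \tau_j^i\bigl(\psi_j^{0i} w(\xi')\bigr)$ and $a_i u(\xi')=\sum_j \tau_j^i\bigl(\psi_j^{1i} u(\xi')\bigr)$, and I define auxiliary variables $\hat m_j^{0i}(\xi'):=\psi_j^{0i} w(\xi')$ and $\hat m_j^{1i}(\xi'):=\psi_j^{1i} u(\xi')$. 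The standard four-inequality Glover linearization encodes each binary-continuous product exactly, but since $\hat m_j^{0i}(\xi')$ enters the maximized objective with coefficient $-\rho_i^\xi(\xi')\tau_j^i$ the optimizer is driven toward the lower envelope, so only $\hat m_j^{0i}(\xi')\ge 0$ and $\hat m_j^{0i}(\xi')\ge w(\xi')+\bar w(\xi')(\psi_j^{0i}-1)$ need to be retained; a symmetric argument based on the positive coefficient $\sigma_i^\xi(\xi')\tau_j^i$ leaves only $\hat m_j^{1i}(\xi')\le u(\xi')$ and $\hat m_j^{1i}(\xi')\le \bar u(\xi')\psi_j^{1i}$. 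Collecting these into $\hat M_j^{0i}(\xi')$ and $\hat M_j^{1i}(\xi')$ and substituting back directly reproduces \eqref{drmdp_bellman_mip_set_2}.

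The main obstacle I foresee is justifying this one-sided reduction rigorously: the signs of $\rho_i^\xi(\xi')\tau_j^i$ and $\sigma_i^\xi(\xi')\tau_j^i$ are determined by the linear regression fits in \eqref{affine_pas_u}--\eqref{affine_pas_l} and need not be uniformly favorable, so a careful case analysis is required either to show that the dropped halves of the Glover linearization remain slack at every optimal solution, or to argue that the retained constraints still yield a valid upper bound on $V^t(\xi)$ under the outer \emph{max} (consistent with the word \emph{approximated} in the statement). Once this sign bookkeeping is settled, the remaining work is the mechanical assembly of \eqref{drmdp_bellman_mip_set_2} from the linearized objective together with the unchanged constraints $q\bm{1}\le \lambda\bm{V}^{t+1}+\bm{w}-\bm{u}$, $\bm{w}+\bm{u}\le k\bm{1}$, and $\bm{w},\bm{u}\ge\bm{0}$.
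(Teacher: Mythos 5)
Your proposal follows essentially the same route as the paper, which in fact supplies no formal proof of this corollary: the paper's entire justification is the informal discussion following the statement, which---exactly as you describe---starts from the reformulation \eqref{drmdp_bellman_set_1}, substitutes the linear decision rules \eqref{affine_pas_u}--\eqref{affine_r}, writes $a_i=\sum_{j}\tau_j^i\psi_j^i$ with binary indicators, replaces the binary--continuous products by the auxiliary variables $\hat m_j^{0i}(\xi')$, $\hat m_j^{1i}(\xi')$, and asserts that the one-sided envelopes $\hat M_j^{0i}(\xi')$, $\hat M_j^{1i}(\xi')$ enforce the logic \eqref{unary_logic}. The obstacle you flag at the end is genuine and is not addressed in the paper. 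Since $\hat M_j^{0i}(\xi')$ only bounds $\hat m_j^{0i}(\xi')$ from below (with lower envelope equal to $\psi_j^{0i}w(\xi')$) and $\hat M_j^{1i}(\xi')$ only bounds $\hat m_j^{1i}(\xi')$ from above (with upper envelope $\min\{u(\xi'),\bar u(\xi')\psi_j^{1i}\}=\psi_j^{1i}u(\xi')$), the identities $\hat m_j^{0i}(\xi')=\psi_j^{0i}w(\xi')$ and $\hat m_j^{1i}(\xi')=\psi_j^{1i}u(\xi')$ hold at an optimum only when the maximization pushes each auxiliary variable onto the retained side, i.e.\ only when $\rho_i^\xi(\xi')\tau_j^i\ge 0$ and $\sigma_i^\xi(\xi')\tau_j^i\ge 0$; because $\tau_j^i\ge 0$, this is a nonnegativity condition on the regression slopes, which the fits in \eqref{affine_pas_u}--\eqref{affine_pas_l} do not guarantee (negative slopes are in fact expected, since interventions are meant to decrease some transition probabilities). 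When a sign is unfavorable the corresponding $\hat m$ variable is unbounded in the improving direction, so the formulation does not merely relax---it breaks; a complete proof must either add the complementary pair of Glover/unary inequalities for those terms or impose the sign condition as a hypothesis. You should also make explicit two pieces of bookkeeping that the corollary leaves implicit: the linking constraints $a_i=\sum_j\tau_j^i\psi_j^{0i}=\sum_j\tau_j^i\psi_j^{1i}$, without which the binaries are decoupled from $a$ and from each other and the claimed exactness relative to Corollary \ref{coro_mccormick} is lost, and the identification $\bar w(\xi')=\bar u(\xi')=k$ coming from $\bm{w}+\bm{u}\le k\bm{1}$, $\bm{w},\bm{u}\ge\bm{0}$.
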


The main difference between \eqref{drmdp_bellman_mip_set_1} and \eqref{drmdp_bellman_mip_set_2} is that \eqref{drmdp_bellman_mip_set_2} introduces binary variables, $\psi_j^i$, to represent whether or not the discrete action is chosen. Therefore, we can represent the action chosen as the following:
$$a^i=\sum_{j=1}^{A_i} \tau_j^i\psi_j^i, \hspace{4mm}\text{where } \psi_j^i = \begin{cases} 1 \text{ if } \tau_j^i \text{ is chosen}\\0 \hspace{1mm} \text{otherwise} \end{cases}$$
Thus, we can represent the  bilinear terms in \eqref{drmdp_bellman_mip_set_2} as: {$$a^iw(\xi')=\sum_{j=1}^{A_i}  \tau_j^i\hat{m}_j^{0i}, \text{ and }a^ju(\xi')=\sum_{j=1}^{A_i}  \tau_j^i\hat{m}_j^{1i}$$}
for all $i \in [N_a]$ where
\begin{equation}\hat{m}_j^{0i} = \begin{cases}w(\xi')\text{ if action }j\text{ is chosen} \\ 0 \text{ otherwise}\end{cases}, \hat{m}_j^{1i} = \begin{cases}u(\xi')\text{ if action }j\text{ is chosen} \\ 0 \text{ otherwise}\end{cases}
\label{unary_logic}
\end{equation}

The logic of \eqref{unary_logic} is enforced by the unary envelopes $\hat{M}_j^{0i}$ and $\hat{M}_j^{1i}$, using binary indicator variables $\psi_j^{0i},\psi_j^{1i}$. This allows \eqref{drmdp_bellman_mip_set_2} to be an exact reformulation compared to \eqref{drmdp_bellman_mip_set_1}, which is a relaxation of \eqref{drmdp_bellman}. However, introducing binary variables adds computational complexity so there is an inherent trade-off between \eqref{drmdp_bellman_mip_set_1} and \eqref{drmdp_bellman_mip_set_2}. The realization of this trade-off is discussed further in Section \ref{sec:experiments}. 

\section{Real-Time Dynamic Programming}
\label{sec:rtdp}

To compute optimal policies based on the DRMDP model of the environment, we utilize the Real-Time Dynamic Programming (RTDP) algorithm \citep{barto_1995_rtdp}. RTDP is an online algorithm that combines heuristics search and dynamic programming to solve MDPs. Unlike traditional dynamic programming, RTDP does not require evaluating the entire state space to find an optimal solution. Instead, it computes a partial optimal policy for states reachable from the initial state, making it computationally efficient, especially for large state spaces.

It is important to note that the original RTDP algorithm iteratively solves the Bellman equation $\max_{a \in \mathcal{A}} \tilde{r}_{a\xi} + \lambda p_{a\xi}^\text{T} V^{t+1}$, whereas in our case, we consider the distributionally robust Bellman equation \eqref{drmdp_bellman}.

In the context of a finite-horizon MDP, visiting the same state at different stages can lead to different values of expected reward-to-go. To address this, we adopt the assumption proposed in \cite{xu_2010_drmdp} and treat multiple visits to a state as visiting different states. Therefore, in the following algorithm, we use the notation $(\xi,t)$ to denote the visit to state $\xi$ at stage $t$, and we represent the corresponding expected reward-to-go as $V(\xi,t) = V^t(\xi)$.

\begin{algorithm}[H]
\SetAlgoLined
Input: number of iterations $niter$, initial state $\xi_{init} \in \tilde{\mathcal{S}} \cap \mathcal{S}$ \\
Initialize $V((\xi,t)) = h((\xi,t))$ for all $\xi \in \tilde{\mathcal{S}}$, $t \in \{1,\dots,T\}$ \\
 \For{$i = 1, \dots, niter$}{
    Set $\xi = \xi_{init}$ \\
    \For{$t = 1, \dots, T-1$}{
        Solve the MIP relaxation in \eqref{drmdp_bellman_mip_set_1} or \eqref{drmdp_bellman_mip_set_2} to obtain optimal $V^*((\xi,t))$ and $a^*$ \\
        Update $V((\xi,t)) \xleftarrow{} V^*((\xi,t))$ \\
        Sample the next state $\xi' \in \tilde{\mathcal{S}} \cap \mathcal{S}$, and set $\xi = \xi'$
    }
 }
 \caption{RTDP with admissible heuristics $h((\xi,t))$}
 \label{rtdp_algorithm}
\end{algorithm}

In the provided algorithm, note that $\xi_{init}$ is defined outside the ``for" loops, indicating that for each iteration, we start from the same initial state. The function $h((\xi,t))$ represents a heuristic search pattern \citep{pearl_1984_heuristics, mausam_2012_heuristics}, which aims to find a solution specifically for the states reachable from the initial state by following an optimal policy. This approach is particularly effective for problems with large state spaces since it avoids the need to solve for the entire state space, as required by the traditional value iteration algorithm.

It is worth noting that if $h((\xi,t))$ is an \textit{admissible} heuristic function, meaning that its values are less than or equal to the true reward values, the algorithm will converge to the optimal solution. This result is formally stated below.

\begin{restatable}{thm}{rtdpconvergence} \citep{barto_1995_rtdp}
If the goal is reachable from each initial state and the heuristic function is admissible, the RTDP iterations will eventually yield optimal objective value and optimal controller's policy on the set of states reachable from the initial states. 
\label{rtdp_convergence}
\end{restatable}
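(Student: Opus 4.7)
The plan is to adapt the classical convergence argument of trial-based real-time dynamic programming from \cite{barto_1995_rtdp} to our distributionally robust, finite-horizon setting. The proof rests on three ingredients: (i) monotonicity of the distributionally robust Bellman operator $\mathcal{T}$ defined by
$(\mathcal{T}V)(\xi,t) = \max_{a \in \mathcal{A}} \min_{\mu_{a\xi} \in \mathcal{D}_{a\xi}} \mathbb{E}_{\bm{p}_{a\xi} \sim \mu_{a\xi}}[\tilde{r}_{a\xi} + \lambda \bm{p}_{a\xi}^{\mbox{\tiny T}} \bm{V}^{t+1}]$; (ii) preservation of admissibility (pointwise underestimation of the optimal value $V^*$) under every backup performed by Algorithm~\ref{rtdp_algorithm}; and (iii) a visitation argument guaranteeing that every state-stage pair reachable from $\xi_{init}$ under an optimal policy is updated infinitely often.

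First, I would verify monotonicity of $\mathcal{T}$: if $V \le V'$ componentwise across $\tilde{\mathcal{S}} \times \{1,\ldots,T\}$, then since expectation, inner minimization over $\mu_{a\xi}$, and outer maximization over $a$ each preserve the partial order, we obtain $\mathcal{T}V \le \mathcal{T}V'$. Because $V^*$ is a fixed point of $\mathcal{T}$ by the distributionally robust Bellman equation \eqref{drmdp_bellman}, monotonicity immediately yields that if $V_i$ is admissible then the updated estimate $V_{i+1}$ (which coincides with $\mathcal{T}V_i$ on the visited pair and with $V_i$ elsewhere) remains admissible. Combining admissibility with the form of the update shows that the sequence $\{V_i(\xi,t)\}$ is monotonically non-decreasing and bounded above by $V^*(\xi,t)$, hence converges pointwise to some limit $V_\infty \le V^*$.

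To identify $V_\infty$ with $V^*$ on the reachable states, I would use backward induction in the stage index $t$. At the terminal stage no backup is needed and $V_\infty(\xi,T) = \tilde{q}_R(\xi) = V^*(\xi,T)$ trivially. Assuming $V_\infty(\cdot,t+1) = V^*(\cdot,t+1)$ on reachable states, the visitation argument ensures every reachable $(\xi,t)$ is updated infinitely often; passing to the limit in the Bellman backup and using continuity of $\mathcal{T}$ in its vector argument (finite $\mathcal{A}$ and a compact ambiguity set $\mathcal{D}_{a\xi}$) gives $V_\infty(\xi,t) = (\mathcal{T}V_\infty)(\xi,t) = V^*(\xi,t)$. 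Greedy action selection from $V^*$ then recovers an optimal controller's policy on the reachable set.

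The main obstacle is step (iii) in the distributionally robust setting. Unlike classical RTDP, nature's choice of worst-case $\mu_{a\xi}$ is adversarial and depends on the current value estimate, so the ``sample the next state'' operation in Algorithm~\ref{rtdp_algorithm} must be specified carefully. Following the finite-horizon convention of \cite{xu_2010_drmdp} that treats each $(\xi,t)$ as a distinct state, I would sample $\xi'$ according to the nominal transition $\bm{\tilde{p}}^0_{a^*\xi}$ under the optimal action $a^*$ extracted from the current MIP reformulation \eqref{drmdp_bellman_mip_set_1} or \eqref{drmdp_bellman_mip_set_2}. Reachability of the goal from every initial state, combined with this positive-probability sampling rule, ensures that each relevant $(\xi,t)$ is visited infinitely often almost surely as $niter \to \infty$, closing the convergence argument.
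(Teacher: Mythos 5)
The paper does not actually prove this statement: Theorem~\ref{rtdp_convergence} is imported verbatim from \citet{barto_1995_rtdp} as a citation, and the only proof in the supplement is for the duality reformulation theorem. So your attempt is not ``the same approach as the paper'' or ``a different approach'' --- it is an attempt to supply an argument where the paper supplies none, by adapting the classical stochastic-shortest-path RTDP proof to the finite-horizon distributionally robust setting. The skeleton you use (monotonicity of the Bellman operator, preservation of admissibility under backups, a visitation argument, backward induction on the stage index) is the right skeleton, and you correctly identify that the delicate point is step (iii). One small local issue: admissibility $V_i \le V^*$ alone does not give $V_{i+1} \ge V_i$; monotone non-decrease of the iterates requires $V_i \le \mathcal{T}V_i$, which you must establish inductively from the terminal stage (where the values are exact) rather than asserting it from ``admissibility combined with the form of the update.'' In the finite-horizon case this does go through, but as written the claim is circular.

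The genuine gap is in the backward-induction step. You assume $V_\infty(\cdot,t+1) = V^*(\cdot,t+1)$ \emph{on reachable states} and conclude $V_\infty(\xi,t) = (\mathcal{T}V_\infty)(\xi,t) = V^*(\xi,t)$. But the distributionally robust backup at $(\xi,t)$ does not only read the values of nominally reachable successors: the inner minimization over $\mu_{a\xi} \in \mathcal{D}_{a\xi}$ (equivalently, the dual constraint $q\bm{1} \le \lambda \bm{V}^{t+1} + \bm{w} - \bm{u}$ in \eqref{drmdp_bellman_set_1}, which couples $q$ to \emph{every} component of $\bm{V}^{t+1}$) lets the adversary place transition mass on any successor in $\Delta(\tilde{\mathcal{S}})$ consistent with the (soft) mean constraints. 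Your proposed sampling rule draws $\xi'$ from the nominal $\bm{\tilde{p}}^0_{a^*\xi}$, so states in the support of the adversary's worst-case distribution that are not in the nominal support may never be visited; their values remain at the heuristic $h \le V^*$, and the backup at $(\xi,t)$ then converges to something that is only a lower bound on $V^*(\xi,t)$, possibly strictly. In classical RTDP the sampling dynamics and the backup dynamics coincide, so ``reachable'' and ``relevant'' align; in the DR setting they need not, and closing the argument requires either (a) showing the adversary's optimal distribution is supported on nominally reachable states, (b) sampling successors from the worst-case distribution extracted from the current MIP solution, or (c) proving exactness of $V_\infty(\cdot,t+1)$ on all of $\tilde{\mathcal{S}}$, which would forfeit the ``partial policy'' efficiency the theorem is meant to deliver. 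None of these is done in your proposal.
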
 

\section{Numerical Studies}
\label{sec:experiments}
 
In this section, we evaluate the effectiveness of our DRMDP model and RTDP algorithm in addressing the epidemic control problem.

We begin with Subsection {\ref{model_comparison}}, where we conduct a comprehensive performance comparison between our DRMDP model and traditional MDP-based models for addressing the epidemic control problem. Specifically, in Subsection {\ref{mccormick_vs_unary_model}}, we analyze the performance of two formulations of the DRMDP model: one utilizing the relaxed McCormick envelope (Corollary {\ref{coro_mccormick}}) and the other using the exact unary envelope (Corollary {\ref{coro_unary}}). This analysis helps us determine the optimal formulation for the DRMDP model before proceeding with the comparison. Moving on to Subsection {\ref{mdp_based_model_comparison}}, we present a comprehensive performance comparison between our DRMDP model and classic MDP as well as robust MDP models. We evaluate the models based on their effectiveness in controlling the number of infectives, the resulting epidemic control policies, and the total discounted health and economic loss. To assess the robustness of the models, we consider various scenarios, including accurate or inaccurate knowledge of transition probabilities.

Additionally, to assess the effectiveness and efficiency of RTDP when applied to the DRMDP model, we perform a comparative analysis in Subsection {\ref{algo_comparison}} between RTDP algorithm and the conventional dynamic programming (DP) approach using backward induction.

Lastly, in Subsection {\ref{sensitivity}}, we conduct a sensitivity analysis of the DRMDP model for the epidemic control problem. By systematically varying the values of various input parameters, we examine their influence on the model's performance. This analysis allows us to assess the model's sensitivity to changes in epidemic parameters and understand its performance across different scenarios.

All MDP-based models are solved using Gurobi. The RTDP and DP algorithms are implemented in Python. All experiments are conducted on machines equipped with 2.7 GHz Quad-Core Intel Core i7 processors.

\subsection{Model Comparison}
\label{model_comparison}

In this subsection, we present a comprehensive performance comparison between our proposed DRMDP model and traditional MDP-based models for addressing the epidemic control problem.

We refer to the decision-dependent DRMDP model formulated with the McCormick MIP approach in \eqref{drmdp_bellman_mip_set_1} as \texttt{DRMDP-McCormick}, and we denote the decision-dependent DRMDP model formulated with the unary MIP approach in \eqref{drmdp_bellman_mip_set_2} as \texttt{DRMDP-Unary}. In all experiments, we set $k=1000$ for \eqref{drmdp_bellman_mip_set_1} and \eqref{drmdp_bellman_mip_set_2}. For comparison purposes, we consider a classic MDP model with known transition probabilities $\bm{\tilde{p}}_{a\xi}^0$ as in equation \eqref{tilde_pas}. Thus, we solve the Bellman equation $V^t(\xi) = \max_{a \in \mathcal{A}_\xi} \{ \tilde{r}_{a\xi} + \lambda (\bm{\tilde{p}}_{a\xi}^{0})^{\mbox{\tiny T}} \bm{V}^{t+1} \}$ at stage $t$ for this baseline, denoted as \texttt{MDP}. Additionally, we consider a robust MDP model where we use the worst-case transition probabilities $\bm{p}_{a\xi}'$. These probabilities represent the highest probability of transitioning from exposed to infectious while staying within a distance of $0.5$ from the nominal probability $\bm{\tilde{p}}_{a\xi}^0$ in \eqref{tilde_pas}. Consequently, we solve the Bellman equation $V^t(\xi) = \max_{a \in \mathcal{A}_\xi} \{ \tilde{r}_{a\xi} + \lambda (\bm{p}_{a\xi}')^{\mbox{\tiny T}} \bm{V}^{t+1} \}$ at stage $t$ for this robust MDP baseline, which we denote as \texttt{Robust MDP}.

We compute optimal strategies for all models using the RTDP algorithm. For the heuristic function of RTDP, we utilize $h((\xi,T)) = 0, \forall \xi \in \mathcal{\tilde{S}}$ and $h((\xi,t)) = \max_{a \in \mathcal{A}_\xi} \tilde{r}_{a\xi}, \forall \xi \in \mathcal{\tilde{S}}, t = 1,\dots, T-1$. In other words, $h((\xi,t))$ represents only the initial reward, which is less than or equal to the true reward (initial reward plus some non-negative future reward). Therefore, $h((\xi,t))$ is considered \textit{admissible}, and Theorem \ref{rtdp_convergence} holds.

\subsubsection{McCormick and Unary for DRMDP}
\label{mccormick_vs_unary_model}

To determine the optimal formulation for the DRMDP, we compare the computational efficiency and performance of \texttt{DRMDP-McCormick} and \texttt{DRMDP-Unary}.

We conduct experiments with the epidemic control problem, setting $M = 5$, $L = 5$, a time horizon of $T = 12$ (representing a 12-month horizon), a discretization level of $Y = 30$, an initial proportion of exposed individuals $p_E(1) = 0.1$, an initial proportion of susceptible individuals $p_S(1) = 0.6$, and an initial proportion of infectious individuals $p_I(1) = 0.3$. When simulating the spread of the epidemic, we consider two scenarios for the transition probabilities:

\begin{enumerate}
    \item $\bm{\tilde{p}}_{a\xi}^0$ in \eqref{tilde_pas}. In this scenario, the model possesses precise knowledge of the transition probabilities.
    \item  $\bm{q}_{a\xi}$, which satisfies $||\bm{q}_{a\xi} - \bm{\tilde{p}}_{a\xi}^0||_1 \le 0.5$. In this scenario, the transition probabilities are incorrectly specified, and the model lacks knowledge of the true transition probabilities.
\end{enumerate}

The average computational time per iteration for \texttt{DRMDP-Unary} is $149.9s$, while for \texttt{DRMDP-McCormick}, it is $75.1s$. However, the performance of \texttt{DRMDP-Unary} and \texttt{DRMDP}-\texttt{McCormick} is similar. Specifically, for transition probabilities $\bm{\tilde{p}}_{a\xi}^0$, the performance values are $-3.77 \times 10^7$ and $-3.79 \times 10^7$, respectively. For transition probabilities $\bm{q}_{a\xi}$, the performance values are $-3.69 \times 10^7$ and $-3.72 \times 10^7$, respectively. Since \texttt{DRMDP-McCormick} achieves similar performance with only half the runtime, we adopt \texttt{DRMDP-McCormick} for the subsequent experiments, denoting it as \texttt{DRMDP} for brevity.

\subsubsection{DRMDP, MDP and Robust MDP}
\label{mdp_based_model_comparison}

We proceed by comparing the performance of our proposed \texttt{DRMDP} model to classic \texttt{MDP} and \texttt{Robust MDP} models.

We conduct experiments with the epidemic control problem, setting $M = 5$, $L = 5$, a time horizon of $T = 12$ (representing a 12-month horizon), and a discretization level of $Y = 100$. We fix the initial proportion of exposed individuals as $p_E(1) = 0.1$ and consider different initial proportions of susceptible and infectious individuals: $p_S(1) = 0.60, 0.65, 0.70, 0.75$ and $p_I(1) = 0.30, 0.25, 0.20, 0.15$, respectively. Similarly, we consider two different sets of transition probabilities, $\bm{\tilde{p}}_{a\xi}^0$ and $\bm{q}_{a\xi}$, for simulating the epidemic.

The results comparing \texttt{DRMDP}, \texttt{MDP}, and \texttt{Robust MDP} are presented in Figure \ref{accumulated_reward_dd1_versus_mdp}. As shown in Figure \ref{fig:1a}, when the transition probability used in simulation is $\bm{\tilde{p}}_{a\xi}^0$, the total discounted rewards of \texttt{DRMDP}, \texttt{MDP}, and \texttt{Robust MDP} are similar. However, when the transition probability used in simulation is $\bm{q}_{a\xi}$ (Figure \ref{fig:1b}), the total discounted rewards of \texttt{DRMDP} are higher compared to \texttt{MDP} and \texttt{Robust MDP}. These findings demonstrate that \texttt{DRMDP} outperforms \texttt{MDP} and \texttt{Robust MDP} in adapting to environments when the true transition probabilities are unknown.

\begin{figure}[H]
\centering
  \begin{subfigure}{0.45\textwidth}
    \includegraphics[width=0.9\linewidth]{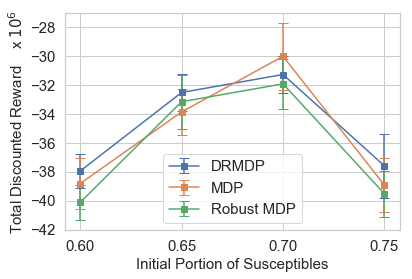}
    \caption{} \label{fig:1a}
  \end{subfigure}%
  \hspace{1cm} 
  \begin{subfigure}{0.45\textwidth}
    \includegraphics[width=0.9\linewidth]{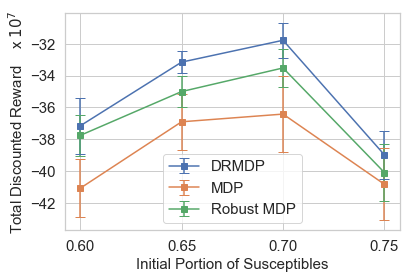}
    \caption{} \label{fig:1b}
  \end{subfigure}%
\caption{Total rewards versus initial proportion of susceptible individuals, averaged across $10$ runs. The error bars show mean $\pm$ standard deviation. (a) the transition probability used in simulation is $\bm{\tilde{p}}_{a\xi}^0$. (b)  the transition probability used in simulation is $\bm{q}_{a\xi}$. }
\label{accumulated_reward_dd1_versus_mdp}
\end{figure}

The optimal actions taken by \texttt{DRMDP}, \texttt{MDP}, and \texttt{Robust MDP} at each stage are summarized in Table \ref{action_table_dd1_versus_mdp}, where the notation $[x,y]$ represents the action of choosing a vaccination level of $x$ and a transmission-reduction level of $y$. Analyzing Table \ref{action_table_dd1_versus_mdp}, we observe that \texttt{DRMDP} administers vaccinations to a greater number of susceptible individuals compared to \texttt{MDP}. Additionally, as the time period progresses, the need for implementing interventions decreases across all models.

Furthermore, with increasing initial proportion of susceptible individuals $p_S(1)$, we note that vaccinations are required in more stages. For instance, in the case of \texttt{DRMDP}, when $p_S(1) = 0.60$, vaccination actions are taken during stages $t = 1$ to $2$, while for $p_S(1) = 0.75$, vaccinations are required during stages $t = 1$ to $3$. This observation indicates a higher demand for vaccinations when a larger proportion of the population is susceptible.

Moreover, the results demonstrate that \texttt{DRMDP} adopts a more aggressive vaccination strategy compared to \texttt{MDP}, implying that the distributionally robust approach favors allocating more resources towards vaccines to ensure a policy that is robust against uncertainty. On the other hand, \texttt{Robust MDP} implements an even stricter policy than \texttt{DRMDP}, resulting in lower total rewards due to the increased cost of intervention. This finding suggests that \texttt{DRMDP}, which considers the worst-case distribution $\mu_{a\xi}$, exhibits a less conservative behavior than \texttt{Robust MDP}.

\begin{table}[H]
\scriptsize 
\renewcommand{\arraystretch}{1}
\caption{Action taken at each stage (averaged across $10$ runs) when the transition probability used in simulation is $\bm{q}_{a\xi}$.}
\begin{adjustbox}{width=\columnwidth,center}
\centering
\begin{tabular}{llllllllllll}
 \hline
  Stage & $t=1$ & $t=2$ & $t=3$ & $t=4$ & $t=5$ & $t=6$ & $t=7$ & $t=8$ & $t=9$ & $t=10$ & $t=11$ \\
 \hline
   DRMDP, $p_S(1) = 0.60$ & [1,5] & [5,5] & [0,0] & [0,0] & [0,0] & [0,0] & [0,0] & [0,0] & [0,0] & [0,0] & [0,0] \\
 \hline
   MDP, $p_S(1) = 0.60$ & [1,5] & [4.6,5] & [0.5,0.5] & [0,0] & [0,0] & [0,0] & [0,0] & [0,0] & [0,0] & [0,0] & [0,0]  \\
 \hline
   Robust MDP, $p_S(1) = 0.60$ & [1,5] & [5,5] & [0,0] & [0,0] & [0,0] & [0,0] & [0,0] & [0,0] & [0,0] & [0,0] & [0,0] \\
 \hline
   DRMDP, $p_S(1) = 0.65$ & [1,5] & [3.8,5] & [3,0] & [0,0] & [0,0] & [0,0] & [0,0] & [0,0] & [0,0] & [0,0] & [0,0] \\
 \hline
   MDP, $p_S(1) = 0.65$ & [1,5] & [3.6,5] & [2.5,2.5] & [0,0] & [0,0] & [0,0] & [0,0] & [0,0] & [0,0] & [0,0] & [0,0]  \\
\hline
   Robust MDP, $p_S(1) = 0.65$ & [1,5] & [4.1,5] & [3,1] & [0,0] & [0,0] & [0,0] & [0,0] & [0,0] & [0,0] & [0,0] & [0,0] \\
 \hline
   DRMDP, $p_S(1) = 0.70$ & [1,5] & [3,5] & [5,0] & [0,0] & [0,0] & [0,0] & [0,0] & [0,0] & [0,0] & [0,0] & [0,0] \\
 \hline
   MDP, $p_S(1) = 0.70$ & [1,5] & [2.7,5] & [5,5] & [0,0] & [0,0] & [0,0] & [0,0] & [0,0] & [0,0] & [0,0] & [0,0]  \\
\hline
   Robust MDP, $p_S(1) = 0.70$ & [1,5] & [3.2,5] & [5,1] & [0,0] & [0,0] & [0,0] & [0,0] & [0,0] & [0,0] & [0,0] & [0,0] \\
 \hline
   DRMDP, $p_S(1) = 0.75$ & [1,5] & [2,5] & [5,0.5] & [0,0] & [0,0] & [0,0] & [0,0] & [0,0] & [0,0] & [0,0] & [0,0] \\
 \hline
   MDP, $p_S(1) = 0.75$  & [1,5] & [2,5] & [4,4] & [1,1] & [0,0] & [0,0] & [0,0] & [0,0] & [0,0] & [0,0] & [0,0] \\ 
 \hline
   Robust MDP, $p_S(1) = 0.75$ & [1,5] & [2.4,5] & [5,1.5] & [0.7,0] & [0,0] & [0,0] & [0,0] & [0,0] & [0,0] & [0,0] & [0,0] \\
 \hline
\end{tabular}
\label{action_table_dd1_versus_mdp}
\end{adjustbox}
\end{table}

The percentage of infectives, recovered individuals, and rewards for \texttt{DRMDP}, \texttt{MDP}, and \texttt{Robust MDP} at each stage, considering the true transition probability $\bm{q}_{a\xi}$, are depicted in Figure \ref{infectives_dd1_versus_mdp}, \ref{recovered_dd1_versus_mdp}, and \ref{reward_dd1_versus_mdp}, respectively. From these figures, several observations can be made.

Firstly, \texttt{DRMDP} exhibits higher stage-wise rewards and demonstrates greater effectiveness in controlling the number of infectives compared to \texttt{MDP}. Specifically, in Figure \ref{infectives_dd1_versus_mdp} and \ref{recovered_dd1_versus_mdp}, both models perform similarly in the early stages ($t=1$ to $4$), but \texttt{DRMDP} achieves zero infectives and full recovery faster than \texttt{MDP} in the intermediate to late stages ($t=5$ to $12$). Similarly, Figure \ref{reward_dd1_versus_mdp} showcases that \texttt{DRMDP} achieves a higher reward earlier than \texttt{MDP}.

These results can be attributed to the fact that the optimal policy derived from the \texttt{MDP} model is based on the assumption that the true transition probabilities are $\bm{\tilde{p}}_{a\xi}^0$, while the policy obtained from the \texttt{DRMDP} model is robust to uncertainty in transition probabilities, which allows it to adapt and perform well even without knowing the true probabilities.

Comparing \texttt{DRMDP} with \texttt{Robust MDP}, we observe that \texttt{DRMDP} is slightly less effective in controlling the number of infectives but achieves a higher stage-wise reward overall. These findings align with our earlier conclusions that \texttt{DRMDP} exhibits a less conservative behavior compared to \texttt{Robust MDP}.

\begin{figure}[H]
  \begin{subfigure}{0.25\textwidth}
    \centering
    \includegraphics[width=0.99\linewidth]{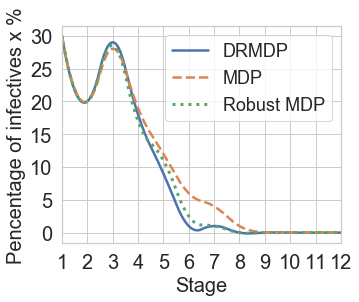}
    \caption{$p_S(1) = 0.6$} \label{fig:4a}
  \end{subfigure}%
  \hfill
  \begin{subfigure}{0.25\textwidth}
    \centering
    \includegraphics[width=0.99\linewidth]{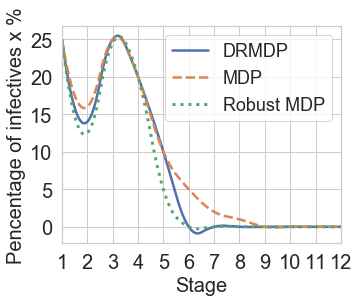}
    \caption{$p_S(1) = 0.65$} \label{fig:4b}
  \end{subfigure}%
  \hfill
    \begin{subfigure}{0.25\textwidth}
    \centering
    \includegraphics[width=0.99\linewidth]{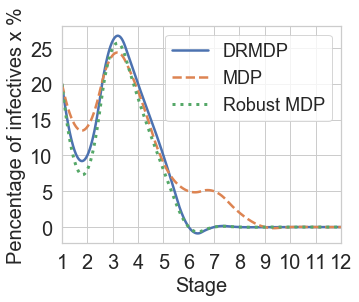}
    \caption{$p_S(1) = 0.7$} \label{fig:4c}
  \end{subfigure}%
\hfill
    \begin{subfigure}{0.25\textwidth}
    \centering
    \includegraphics[width=0.99\linewidth]{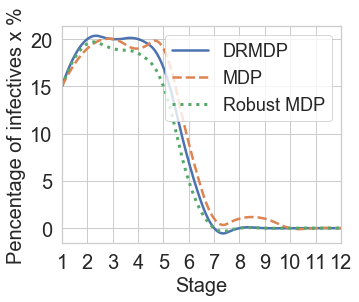}
    \caption{$p_S(1) = 0.75$} \label{fig:4d}
  \end{subfigure}%
\caption{Percentage of infectives versus stage (averaged across $10$ runs) when the transition probability used in simulation is $\bm{q}_{a\xi}$. }
\label{infectives_dd1_versus_mdp}
\end{figure}

\begin{figure}[H]
  \begin{subfigure}{0.25\textwidth}
    \centering
    \includegraphics[width=0.99\linewidth]{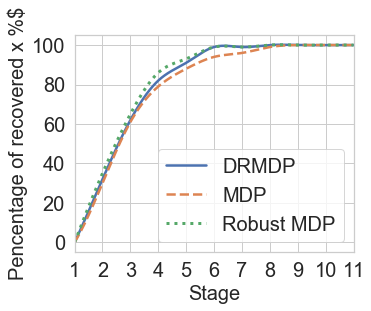}
    \caption{$p_S(1) = 0.6$} \label{fig:5a}
  \end{subfigure}%
  \hfill
  \begin{subfigure}{0.25\textwidth}
    \centering
    \includegraphics[width=0.99\linewidth]{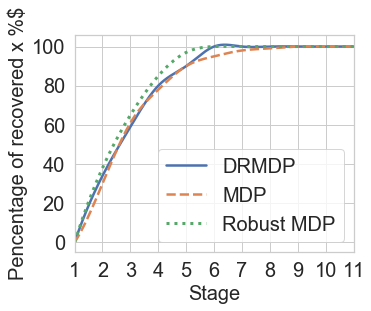}
    \caption{$p_S(1) = 0.65$} \label{fig:5b}
  \end{subfigure}%
  \hfill
    \begin{subfigure}{0.25\textwidth}
    \centering
    \includegraphics[width=0.99\linewidth]{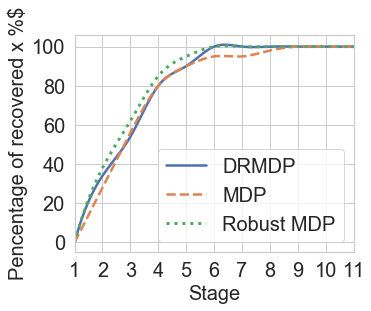}
    \caption{$p_S(1) = 0.7$} \label{fig:5c}
  \end{subfigure}%
\hfill
    \begin{subfigure}{0.25\textwidth}
    \centering
    \includegraphics[width=0.99\linewidth]{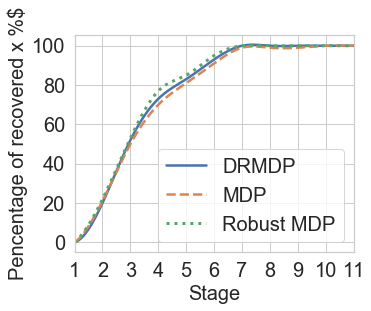}
    \caption{$p_S(1) = 0.75$} \label{fig:5d}
  \end{subfigure}%
\caption{Percentage of recovered versus stage (averaged across $10$ runs) when the transition probability used in simulation is $\bm{q}_{a\xi}$. }
\label{recovered_dd1_versus_mdp}
\end{figure}

\begin{figure}[H]
  \begin{subfigure}{0.25\textwidth}
    \centering
    \includegraphics[width=0.99\linewidth]{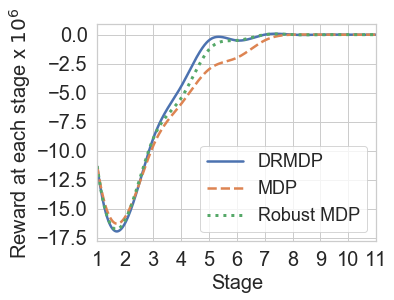}
    \caption{$p_S(1) = 0.6$} \label{fig:6a}
  \end{subfigure}%
  \hfill
  \begin{subfigure}{0.25\textwidth}
    \centering
    \includegraphics[width=0.99\linewidth]{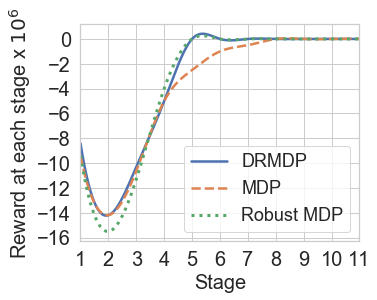}
    \caption{$p_S(1) = 0.65$} \label{fig:6b}
  \end{subfigure}%
  \hfill
  \begin{subfigure}{0.25\textwidth}
    \centering
    \includegraphics[width=0.99\linewidth]{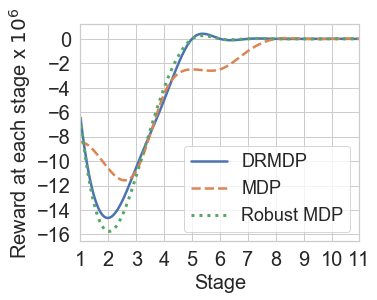}
    \caption{$p_S(1) = 0.7$} \label{fig:6c}
  \end{subfigure}%
    \hfill
  \begin{subfigure}{0.25\textwidth}
    \centering
    \includegraphics[width=0.99\linewidth]{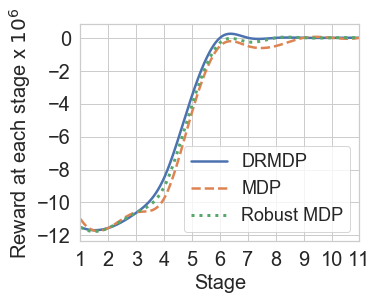}
    \caption{$p_S(1) = 0.75$} \label{fig:6d}
  \end{subfigure}
\caption{Stage-wise reward (averaged across $10$ runs) when the transition probability used in simulation is $\bm{q}_{a\xi}$. }
\label{reward_dd1_versus_mdp}
\end{figure}

\subsection{Algorithm Comparison}
\label{algo_comparison}

To comprehensively evaluate the effectiveness and efficiency of the RTDP algorithm when applied to the \texttt{DRMDP} model, we compare its performance with the conventional Dynamic Programming (DP) approach using backward induction. The DP method for solving the \texttt{DRMDP} model is described in Algorithm \ref{backward_dp}.

\small
\begin{algorithm}[H]
\SetAlgoLined
Initialize $V((\xi,T)) = \tilde{q}_R(\xi)$ for $\xi \in \tilde{\mathcal{S}}$ \\
 \For{$t = T-1, \dots, 1$}{
    \For{$\xi \in \tilde{\mathcal{S}} \cap \mathcal{S}$}{
        Solve MIP relaxation in \eqref{drmdp_bellman_mip_set_1} or \eqref{drmdp_bellman_mip_set_2} to obtain optimal $V^*((\xi,t))$ and $a^*$ \\
        Update $V((\xi,t)) \xleftarrow{} V^*((\xi,t))$ \\
    }
    Update $V((\xi,t)) \xleftarrow{} 0$ for $\xi \in \tilde{\mathcal{S}} \backslash\mathcal{S}$ \\  
 }
 \caption{Conventional dynamic programming}
 \label{backward_dp}
\end{algorithm}
\normalsize

In this comparison, we consider different settings of the state space discretization level, denoted as $Y$, as defined in Section \ref{sec:discretization}. This allows us to assess the impact of state space granularity on the performance of both algorithms. For the RTDP algorithm, we execute a total of 20 iterations when $Y=5$, while for $Y = 10$, $15$, and $20$, we perform 50 iterations. 

As shown in Figure \ref{fig:dp_rewards}, DP yields slightly higher total rewards than RTDP for small discretization levels ($Y=5, 10$). However, as the discretization level increases ($Y=15, 20$), the total rewards of DP and RTDP become similar. Conversely, as depicted in Figure \ref{fig:dp_runtime}, DP requires significantly more computational time than RTDP, especially for high discretization levels. This is due to the fact that DP needs to solve Bellman equations for all states, and the number of states grows polynomially with the discretization level.

\begin{figure}[H]
\centering
  \begin{subfigure}{0.4\textwidth}
    \includegraphics[width=0.85\linewidth]{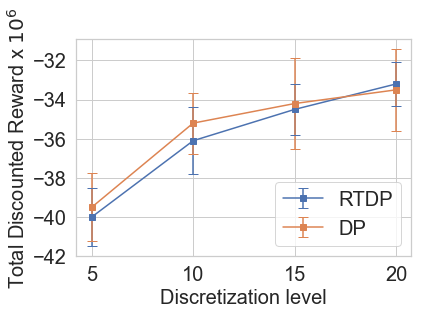}
    \caption{Total rewards} \label{fig:dp_rewards}
  \end{subfigure}%
  \hspace{0.5cm}
  \begin{subfigure}{0.43\textwidth}
    \vspace{0.15cm}
    \includegraphics[width=0.85\linewidth]{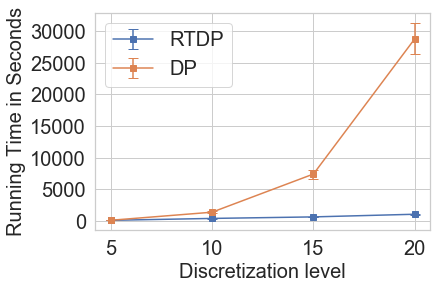}
    \caption{Computational time} \label{fig:dp_runtime}
  \end{subfigure}%
\caption{Performance and runtime comparisons of RTDP and DP. Averaged across $3$ runs. }
\label{fig:rtdp_dp_comparison}
\end{figure}

\subsection{Sensitivity Analysis}
\label{sensitivity}

In this subsection, we analyze the impact of several input parameters on the performance of the \texttt{DRMDP} model for the epidemic control problem. Specifically, we investigate the influence of the following parameters:

\begin{enumerate}
    \item $Q$: the vaccine price.
    \item $k_R$: the cost of increasing the transmission reduction level by one level.
    \item $\mu \beta$: Here, $\mu$ represents the contact rate without any transmission reduction method, and $\beta$ denotes the probability of a susceptible individual becoming infected upon contact with an infectious individual. Since both $\mu$ and $\beta$ collectively affect the number of susceptible individuals who become exposed, we evaluate their product as a combined variable in the sensitivity analysis.
    \item $W$: the health loss plus treatment cost associated with a single infection.
    \item $\alpha_0$: the maximum possible fractional reduction in the contact rate achievable through transmission reduction interventions.
\end{enumerate}

\begin{figure}[H]
  \begin{subfigure}{0.3\textwidth}
    \centering
    \includegraphics[width=0.9\linewidth]{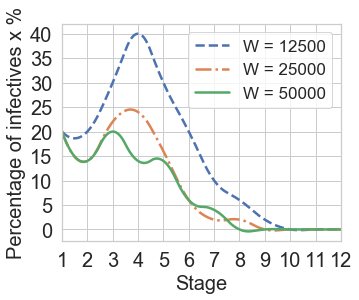}
    \caption{} \label{fig:7a}
  \end{subfigure}%
  \hfill
  \begin{subfigure}{0.3\textwidth}
    \centering
    \includegraphics[width=0.9\linewidth]{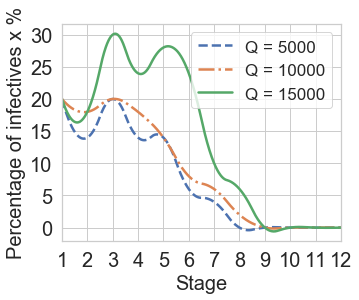}
    \caption{} \label{fig:7b}
  \end{subfigure}%
  \hfill
    \begin{subfigure}{0.3\textwidth}
    \centering
    \includegraphics[width=0.9\linewidth]{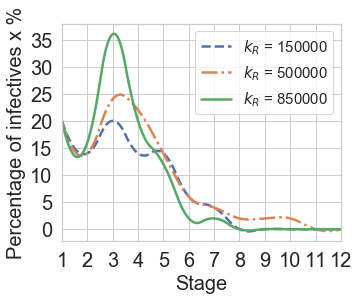}
    \caption{} \label{fig:7c}
  \end{subfigure}%
  
  \begin{subfigure}{0.3\textwidth}
    \centering
    \includegraphics[width=0.9\linewidth]{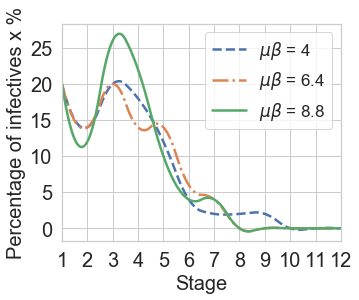}
    \caption{} \label{fig:7d}
  \end{subfigure}%
  \hspace{0.8cm}
  \begin{subfigure}{0.3\textwidth}
    \centering
    \includegraphics[width=0.9\linewidth]{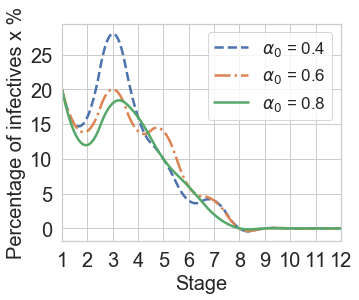}
    \caption{} \label{fig:7e}
  \end{subfigure}%
  
\caption{Sensitivity analysis of \texttt{DRMDP} (solved with RTDP). Percentage of infectives versus stage (averaged across $5$ runs) when the transition probability used in simulation is $\bm{q}_{a\xi}$. }
\label{infective_sensitivity_analysis}
\end{figure}

We kept the value of $k$ fixed at $1000$ in equation \eqref{drmdp_bellman_mip_set_1} and utilized the RTDP algorithm to calculate optimal strategies for the \texttt{DRMDP} model in all our sensitivity analysis experiments. The analysis was performed on the epidemic control problem, considering a time horizon of $T=12$, an initial proportion of exposed individuals $p_E(1)=0.1$, an initial proportion of susceptible individuals $p_S(1)=0.7$, and an initial proportion of infectious individuals $p_I(1)=0.2$.

Figure \ref{infective_sensitivity_analysis} illustrates the percentage of infectives at each stage for different input values. From the figure, we observe that the overall percentage of infectives increases as $Q$, $k_R$, or $\mu\beta$ increases, while it decreases as $W$ or $\alpha_0$ increases.

When the price of vaccines ($Q$) or transmission reduction interventions ($k_R$) increases, the willingness to allocate resources for vaccinations or interventions decreases. Consequently, we expect to see a higher proportion of infected individuals. Additionally, higher values of $\mu$ or $\beta$ lead to easier disease spread, resulting in more infections.

Conversely, an increased value of $W$ indicates a higher penalty for infections, prompting policymakers to prioritize vaccination efforts to reduce the number of infections. Finally, a larger value of $\alpha_0$ signifies the greater effectiveness of transmission reduction methods, leading to a reduced number of infected individuals.

\section{Conclusion}
In this study, we present a Distributionally Robust Markov Decision Process (DRMDP) framework for addressing the dynamic epidemic control problem. Our aim is to provide effective strategies to public health decision-makers that can reduce the proportion of infected individuals while considering cost-efficiency. The proposed DRMDP model is designed to handle the uncertainties inherent in scenarios where the true distribution of disease transmission parameters remains unknown.

To capture the impact of policy actions on the system dynamics, we incorporate an endogenous ambiguity set within our model. This inclusion ensures that the influence of policy-makers' decisions on the system dynamics is comprehensively considered. To solve the problem, we discretize the model and reformulate it as a mixed integer programming (MIP) using either McCormick or unary envelopes. Our computational experiments demonstrate that both approaches yield similar results.

Our findings indicate that the DRMDP outperforms both the classic MDP and robust MDP when evaluated against various metrics, including cost minimization and reduction of infected individuals. This superiority is particularly evident when dealing with scenarios where the true distribution of transition probabilities is unknown. In contrast, the performance of the MDP model deteriorates when the assumption of known transition probabilities is violated. The strength of the DRMDP model lies in its ability to handle uncertain transition probabilities without relying on this assumption, thereby delivering reliable results even in the absence of true probabilities. 

Moreover, we highlight the efficiency and effectiveness of the RTDP algorithm in computing optimal policies based on the DRMDP model. The RTDP algorithm achieves similar performance to traditional DP methods while requiring significantly less computational time.

To conclude our analysis, we conduct a sensitivity analysis that reveals the impact of increasing vaccine and transmission-reduction intervention prices, as well as contact and infection rates, on the proportion of infected individuals. Conversely, we observe that increasing health loss, treatment cost, and intervention effectiveness have the opposite effect.

As a direction for future research, we suggest exploring a continuous action space instead of the current discrete action space. Additionally, while this paper focuses on uncertainty in transition probabilities, future studies could expand their scope to encompass uncertainty in other model parameters.

\bibliographystyle{apalike}
\spacingset{1}
\bibliography{main}

\newpage
\appendix

\pagenumbering{gobble}

\section*{Supplemental Online
Materials to ``Decision-Dependent Distributionally Robust Markov Decision Process Method in Dynamic Epidemic Control" by Jun Song, William Yang and Chaoyue Zhao.}

\vspace{0.5cm}

\thmambiguitysetone*

\begin{proof}
We first obtain the dual formulation of \eqref{inner_problem_set_1} as follows:
\begin{subequations}
    \begin{align}
        \max_{z, \bm{w}, \bm{u}}L'(z,\bm{w},\bm{u}) = \hspace{2mm}& \tilde{r}_{a\xi} + z - \bm{w}^{\mbox{\tiny T}} \bm{\tilde{\eta}}^U_{a\xi} + \bm{u}^{\mbox{\tiny T}} \bm{\tilde{\eta}}^L_{a\xi} \label{dual_obj}\\
        s.t. \hspace{5mm} & \lambda \bm{p}_{a\xi}^{\mbox{\tiny T}} \bm{V}^{t+1} - z + \bm{w}^{\mbox{\tiny T}} \bm{p}_{a\xi} - \bm{u}^{\mbox{\tiny T}} \bm{p}_{a\xi} \ge 0, \hspace{5mm} \forall \bm{p}_{a\xi} \in \Delta(\mathcal{\tilde{S}}),\label{inner_problem_dual_set_1_subeq1}\\
        & \bm{w} + \bm{u} \le k\bm{1}, \\
        & \bm{w}, \bm{u} \ge \bm{0}
    \end{align}
    \label{dual_appendix}
\end{subequations}
where $z$ with unrestricted sign, $\bm{w} \geq 0$, and $\bm{u}\geq 0$ are dual variables for constraints \eqref{inner_prob_prob}, \eqref{inner_prob_u}, and \eqref{inner_prob_l} respectively. Next, we prove that the strong duality is met. If there exists a point $\bar{\bm{x}},\bar{\mu}_{a\xi}$ in the feasible region of \eqref{inner_problem_set_1} such that all inequality constraints  are non-binding, the Slater condition is satisfied and strong duality holds. Note that for any  $\mu_{a\xi},\bm{x}$ that are in the feasible region of \eqref{inner_problem_set_1}, if one of the inequality constraints \eqref{inner_prob_u} or \eqref{inner_prob_l} is binding, we can let $\bar{\bm{x}} = \bm{x}+\epsilon$ for any $\epsilon >0$. Letting $\bar{\mu}_{a\xi} = \mu_{a\xi}$ we have $\bar{\bm{x}},\bar{\mu}_{a\xi}$ satisfy the Slater condition. Therefore strong duality holds and the primal and dual objectives \eqref{inner_prob_obj} and \eqref{dual_obj} are equal.

For some $z,\bm{w}$ and $\bm{u}$, \eqref{inner_problem_dual_set_1_subeq1} is satisfied if its left-hand-side is non-negative for all $\bm{p}_{a\xi} \in \Delta(\mathcal{\tilde{S}})$. Therefore, this is equaivalent to the left hand side being non-negative for the minimum such $\bm{p}_{a\xi}$ that is in the probability simplex. Therefore \eqref{inner_problem_dual_set_1_subeq1} can be reformulated as the following. 
\begin{equation}
\begin{split}
    \min_{\bm{p}_{a\xi} \ge 0} \hspace{5mm} &\lambda \bm{p}_{a\xi}^{\mbox{\tiny T}} \bm{V}^{t+1} - z + \bm{w}^{\mbox{\tiny T}} \bm{p}_{a\xi} - \bm{u}^{\mbox{\tiny T}} \bm{p}_{a\xi} \ge 0 \\
    s.t. \hspace{5mm} & \bm{1}^{\mbox{\tiny T}} \bm{p}_{a\xi} = 1.\\
    \label{constraint_opt}
\end{split}
\end{equation}
Next, we take the dual of the \eqref{constraint_opt}. We introduce the  dual variable $q$, and using the same dualization procedure described before, we arrive at the following formulation for the dual of \eqref{constraint_opt}:
\begin{equation}
\begin{split}
    \max_{q} \hspace{5mm} & q-z \ge 0 \\
    s.t. \hspace{5mm} &q\bm{1}  \le \lambda \bm{V}^{t+1} + \bm{w} - \bm{u}.
    \label{dual_of_constraint}
\end{split}
\end{equation}
Substituing \eqref{dual_of_constraint} into \eqref{inner_problem_dual_set_1_subeq1}, we arrive at the following reformulation of  \eqref{dual_appendix}:
\begin{equation*}
    \begin{split}
        \max_{z, \bm{w}, \bm{u}, q} \hspace{5mm} & \tilde{r}_{a\xi} + z - \bm{w}^{\mbox{\tiny T}} \bm{\tilde{\eta}}^U_{a\xi} + \bm{u}^{\mbox{\tiny T}} \bm{\tilde{\eta}}^L_{a\xi} \\
        s.t. \hspace{5mm} & q-z \ge 0, \\
        & q\bm{1}  \le \lambda \bm{V}^{t+1} + \bm{w} - \bm{u}, \\
        & \bm{w} + \bm{u} \le k\bm{1}, \\
        & \bm{w}, \bm{u} \ge \bm{0},
    \end{split}
\end{equation*} which can be further simplified as the following because $q=z$ at optimality:
\begin{equation*}
    \begin{split}
        \max_{\bm{w}, \bm{u}, q} \hspace{5mm} & \tilde{r}_{a\xi} + q - \bm{w}^{\mbox{\tiny T}} \bm{\tilde{\eta}}^U_{a\xi} + \bm{u}^{\mbox{\tiny T}} \bm{\tilde{\eta}}^L_{a\xi} \\
        s.t. \hspace{5mm}
        & q\bm{1}  \le \lambda \bm{V}^{t+1} + \bm{w} - \bm{u}, \\
        & \bm{w} + \bm{u} \le k\bm{1}, \\
        & \bm{w}, \bm{u} \ge \bm{0}.
    \end{split}
\end{equation*}

We have now reformulated the inner problem. The final step is to combine this result with  the outer maximization problem that includes the action $a\in \mathcal{A}$ as a decision variable to obtain a reformulation of     \eqref{drmdp_bellman}
\begin{equation*}
    \begin{split}
        V^{t}(\xi) = \max_{a \in \mathcal{A}, \bm{w}, \bm{u}, q} & \tilde{r}_{a\xi} + q - \bm{w}^{\mbox{\tiny T}} \bm{\tilde{\eta}}^U_{a\xi} + \bm{u}^{\mbox{\tiny T}} \bm{\tilde{\eta}}^L_{a\xi} \\
        s.t. \hspace{5mm} 
        & q\bm{1}  \le \lambda \bm{V}^{t+1} + \bm{w} - \bm{u},  \\
        & \bm{w} + \bm{u} \le k\bm{1}, \\
        & \bm{w}, \bm{u} \ge \bm{0}.
    \end{split}
    \end{equation*}
\end{proof}

\end{document}